\DeclareSymbolFont{largesymbol}{OMX}{yhex}{m}{n}
\DeclareMathAccent{\Widehat}{\mathord}{largesymbol}{"62}
\title[Global Existence and Large Time Behaviors of Strong Solutions]{Global Existence and Large Time Behaviors of Strong Solutions to the Kinetic Cucker--Smale Model Coupled with the Three Dimensional  Incompressible Navier--Stokes Equations}
\author[C. Jin]{Chunyin Jin}
\address[Chunyin Jin]{\newline College of Science,\newline China Agricultural University,\newline Beijing 100083, P. R. China.}
\email{jinchunyin@163.com}
\newtheorem{theorem}{Theorem}[section]
\newtheorem{definition}{Definition}[section]
\newtheorem{lemma}{Lemma}[section]
\newtheorem{proposition}{Proposition}[section]
\newtheorem{remark}{Remark}[section]
\newcommand{\bbr}{\mathbb R}
\newcommand{\bbn}{\mathbb N}
\newcommand{\e}{\varepsilon}
\newcommand{\bx}{\mbox{\boldmath $x$}}
\newcommand{\bu}{\mbox{\boldmath $u$}}
\newcommand{\bw}{\mbox{\boldmath $w$}}
\newcommand{\bg}{\mbox{\boldmath $g$}}
\newcommand{\bh}{\mbox{\boldmath $h$}}
\begin{document}

\date{\today}

\keywords{Global existence; strong solutions; kinetic Cucker--Smale model; the incompressible Navier--Stokes equations}

\thanks{2020 Mathematics Subject Classification: 35A01, 35B45, 35D35, 35Q35, 35Q92}

\begin{abstract}
In this paper, we study global existence and large time behaviors of strong solutions to the kinetic Cucker--Smale model coupled with the three dimensional incompressible Navier--Stokes equations in the whole space. Using the maximal regularity estimate on the Stokes equations, global-in-time strong solutions to the Cauchy problem of the coupled system are obtained under small initial data regime. The optimal decay rate of the system, in the sense that the energy of the system decays at a rate coinciding with the one corresponding to the underling incompressible Navier--Stokes equations without the coupling term, is also analyzed. Compared with previous results set in spatial-periodic or bounded domains, we circumvent the difficulty caused by unboundedness of the domain using the Fourier splitting method.
\end{abstract}

\maketitle \centerline{\date}
%
%
\section{Introduction}\label{sec-intro}
\setcounter{equation}{0}
If an ensemble of particles is immersed in the incompressible fluid, then the dynamics of the system can be modeled by the kinetic Cucker--Smale model coupled with the incompressible Navier--Stokes equations. Throughout the paper, $\nabla$ and $\Delta$ without indices denote gradient and Laplacian with respect to the spatial variable $x$, respectively. Suppose particles are transported by the fluid velocity. Then the coupled system reads as
\begin{equation} \label{eq-cs-ns}
     \begin{dcases}
         f_t + \bu \cdot \nabla_{x} f+ \nabla_{v} \cdot (L[f]f+(\bu-v)f)=0,\\
         \bu_t+ \bu \cdot \nabla \bu +\nabla P=\Delta \bu +\int_{\bbr^3}(v-\bu)fdv, \\
         \nabla \cdot \bu=0,
     \end{dcases}
\end{equation}
subject to the initial data
\begin{equation} \label{eq-sys-inidata}
  f|_{t=0}=f_0, \quad \bu|_{t=0}=\bu_0,
\end{equation}
with $\bu_0$ satisfying the compatibility condition $\nabla \cdot \bu_0=0$. Here $f(t,x, v)$ is the particle distribution function in phase space $(x, v)\in \bbr^3\times \bbr^3$ at the time $t$. $\bu$ and $P$ represent the fluid velocity and pressure, respectively. $L[f]$ is given by
\[
 L[f](t, x, v)=\int_{\bbr^{3}}\int_{\bbr^3}\varphi(|x-y|)f(t, y, v^*)(v^*-v)dy dv^*,
\]
where $\varphi(\cdot) \in C_{b}^{1}$ is a positive non-increasing function, denoting the interaction kernel. Without loss of generality, we postulate that
\[
 \max\{|\varphi|, |\varphi'|\} \le 1
\]
in the sequel. The detailed background concerning the system \eqref{eq-cs-ns} can be found in \cite{Bae2012}\cite{bae2014global}. We remark that the transport term in $\eqref{eq-cs-ns}_1$ is $\bu \cdot \nabla_{x} f$, instead of $v \cdot \nabla_{x} f$ in the usual kinetic model. This is because we suppose particles are transported by the fluid. Such transport term also appeared in previous literature \cite{Constantin2007Global}\cite{Lin2007cpam}\cite{Lin2008on}, where some micro-macro models arising from polymeric fluids are investigated.

The first equation in \eqref{eq-cs-ns} is the kinetic Cucker--Smale model with the drag term. Now let us review some backgrounds on it. In 2007, Cucker--Smale \cite{Cucker2007} introduced a system of ODEs, termed as the Cucker--Smale model, to describe flocking behaviors in multi-agent systems. Then Ha--Liu \cite{Ha2009} contributed a complete analysis on the Cucker--Smale model using the Lyapunov functional approach, and further derived the kinetic Cucker--Smale model by taking the mean-field limit to the particle model. The results on the particle model in \cite{Ha2009} were extended to measure-valued solutions to the kinetic Cucker--Smale in \cite{Carrillo2010}. See also \cite{Canizo2011} for an elegant analysis employing the optimal transport theory. As for weak and strong solutions in weighted Sobolev spaces, Jin \cite{Jin2018} recently established the well-posedness by developing a unified framework. The method of weighted energy estimates in \cite{Jin2018} is also used to deal with the kinetic equation $\eqref{eq-cs-ns}_1$ in this paper. For classical solutions to the kinetic Cucker--Smale model, we refer to \cite{Ha2008}. Taking into account stochastic factors in the modeling, there is an additional diffusive term in the kinetic Cucker--Smale model. This kind of kinetic equation is of the Fokker--Planck type, allowing for existence of an equilibrium. Duan et al. \cite{duan2010kinetic} analyzed the stability and convergence rate of classical solutions to an equilibrium under small initial perturbations in Sobolev spaces, by using the micro-macro decomposition. Now research for the Cucker--Smale model from particle descriptions to kinetic and hydrodynamic descriptions has been launched. The hydrodynamic limits to some kinetic equations were analyzed in \cite{Figalli2019}\cite{karper2015hydrodynamic}, based on the relative entropy method. For results on the hydrodynamic Cucket--Smale model, we refer to \cite{Ha2014}\cite{ha2015emergent}\cite{jin2015well}\cite{Jin}. The interested reader can also consult the review papers \cite{carrillo2010particle}\cite{choi2017emergent} for the state of the art in this research topic.

The rest two equations in \eqref{eq-cs-ns} are the three dimensional incompressible Navier--Stokes equations with the coupling term. Pioneered by the seminal work of Leray \cite{Leray1934} in 1934, there has been much extensive research on the Navier--Stokes equations. Among them, we only mention results related to this paper. In fact, the incompressible Navier--Stokes equations have an important scaling property, i.e., if $\bu(t,x)$ is a solution on the whole space, then $\bu_{\lambda}(t,x):=\lambda\bu(\lambda^2t,\lambda x)$ is also a solution. A space whose norm is invariant under this scaling is referred to as a critical space. Much work has been carried out in this setting, mainly based on two types of methods. One is by an energy type estimate, and the other by a fixed point argument. Well-posedness in $\dot{H}^{\frac12}(\bbr^3)$ is due to Fujita--Kato \cite{Fujita1964}. The existence results in the critical Besov spaces can be founded in Cannone \cite{Cannone1997A} and Planchon \cite{Planchon1998Asymptotic}. In this direction, the best result was contributed by Koch--Tataru \cite{koch2001}, where small global-in-time solutions in $BMO^{-1}$ were constructed. For the modern method in the setting of the critical $L^3(\bbr^3)$ space, we refer to the excellent book \cite{robinson2016the}. In this paper, we generalize it to the incompressible Navier--Stokes equations with the coupling term. Even though global weak solutions have been known long before in \cite{Leray1934}, their large time behaviors are investigated much later. Kato first made progress in \cite{Kato1984StrongLp}, and laid foundations for decay and existence questions for the solutions to the incompressible Navier--Stokes equations. For weak solutions with integrable initial data, Schonbek \cite{Schonbek1985L} obtained algebraic decay rate for the Cauchy problem, using the Fourier splitting method. Later, this method was extended to some other diffusive equations or with an external force. In this paper, we will generalize it to the coupled kinetic-fluid model.

Coupled kinetic-fluid models were first introduced by Williams \cite{Williams1985Combustion} in the framework of the combustion theory, and have received much attention since then, due to their applications in biotechnology, medicine, waste-water recycling, and mineral processing \cite{Carrillo2006stability}. Most previous results were concentrated on study for weak solutions in spatial-periodic domain, or for classical solutions under small smooth perturbations around an equilibrium.  Global existence and large time behavior of weak solutions to the Vlasov equation coupled with the unsteady Stokes system were investigated by Hamdache \cite{hamdache1998global}, in a bounded domain with reflection boundary conditions. Later, Boudin et al. \cite{boudin2009global} extended Hamdache's work, and studied global existence of weak solutions to the Vlasov--Navier--Stokes equations in spatial-periodic domain. Such coupled kintic-fluid equations also appeared in the setting of flocking particles immersed in fluids. For the kinetic Cucker--Smale model coupled with the Stokes equations, incompressible Navier--Stokes equations, and isentropic compressible Navier--Stokes equations, existence and large time behaviors of weak or strong solutions in spatial-periodic domain have been analyzed in  \cite{Bae2012}\cite{bae2014asymptotic}\cite{bae2014global}\cite{bae2016global}. We refer the reader to \cite{carrillo2011global}\cite{goudon2010navier}\cite{li2017strong} for global classical solutions to the coupled kinetic-fluid models under small perturbations regime.

Recently, the author has started the program to study the Cauchy problem of the kinetic Cucker--Smale model coupled with fluid equations. Local existence and a blowup criterion of strong solutions to the kinetic Cucker--Smale model coupled with the isentropic compressible Navier--Stokes equations were obtained in Jin \cite{jin2019local}, where weighted Sobolev spaces were introduced to overcome difficulties arising from unboundedness of the domain and the coupling term. It was shown that the integrability in time of the spatial $W^{1,\infty}$-norm of $\bu(t,x)$ controls blowup of strong solutions. Along this line, Jin \cite{Jin2021} investigated global existence of strong solutions to the kinetic Cucker--Smale model coupled with the Stokes equations, by using the maximal regularity estimate on the Stokes equations. In this paper, we further study the kinetic Cucker-Smale model coupled with the three dimensional incompressible Navier--Stokes equations. It is remarked that there are two aspects different from most previous results: First, the regularity of initial data is minimal in terms of existence of strong solutions to the incompressible Navier--Stokes equations. We control the quantity determining the extendence of local strong solutions by invoking the maximal regularity estimate on the Stokes equations, instead of using high order energy estimates, which requires initial data lying in high order Sobolev spaces; second, large time behaviors of strong solutions are analyzed in the whole space, instead of bounded or spatial-periodic domain. Thus the Poincar{\'e} inequality is not applicable. We circumvent this difficulty by ingeniously using the Fourier splitting method.

Motivated by our previous studies \cite{jin2019local}\cite{Jin2021}, we introduce the following weighted Sobolev space to overcome difficulty induced from unboundedness of the domain.
\begin{multline*}
 H_{\omega}^1(\bbr^3 \times \bbr^3):=\bigg\{h(x,v):\ h \in L_{\omega}^2(\bbr^3 \times \bbr^3), \\ \nabla_{x} h\in L_{\omega}^2(\bbr^3 \times \bbr^3),\ \nabla_{v}h \in L_{\omega}^2(\bbr^3 \times \bbr^3)\bigg\},
\end{multline*}
\[
 \|h\|_{H_{\omega}^1}^2:=\|h\|_{L_{\omega}^2}^2+\|\nabla_{x}h\|_{L_{\omega}^2}^2+\|\nabla_{v}h\|_{L_{\omega}^2}^2,
\]
where
\[
 \|h\|_{L_{\omega}^2}:=\left(\int_{\bbr^3}\int_{\bbr^3}h^2(x,v)\omega(x)dx dv\right)^{\frac12}, \quad \omega(x):=(1+|x|^2)^{\alpha},\quad \alpha>\frac32,
\]
similarly for $\|\nabla_{x}h\|_{L_{\omega}^2}$ and $\|\nabla_{v}h\|_{L_{\omega}^2}$. The following simplified notations for homogeneous Sobolev Spaces are used in this paper.
\[
 D^1(\bbr^3):=\left\{u\in L^{6}(\bbr^3):\ \nabla u \in L^2(\bbr^3)\right\},
\]
\[
 D^2(\bbr^3):=\left\{u\in L_{loc}^1(\bbr^3):\ \nabla^2u \in L^2(\bbr^3)\right\},
\]
\[
 D^{2,p}(\bbr^3):=\left\{u\in L_{loc}^1(\bbr^3):\ \nabla^2u \in L^p(\bbr^3)\right\}, \quad 1\le p\le \infty.
\]
Next we give the definition of strong solutions to \eqref{eq-cs-ns}-\eqref{eq-sys-inidata}.
\begin{definition} \label{def-stro}
Let $0<T< \infty$. $(f(t,x,v), \bu(t,x),\nabla P(t,x))$ is said to be a strong solution to \eqref{eq-cs-ns}-\eqref{eq-sys-inidata} in $[0,T]$, if
\[
\begin{aligned}
  &f(t,x,v)\in C([0,T];H_{\omega}^1(\bbr^3 \times \bbr^3)),\\
  &\bu(t,x)\in C([0,T];H^{1}(\bbr^3))\cap L^2(0,T;D^{2}(\bbr^3)),\\
  &\bu_t(t,x)\in L^2(0,T;L^{2}(\bbr^3)),\\
  &\nabla P(t,x)\in L^2(0,T;L^{2}(\bbr^3)),
\end{aligned}
\]
and
\[
 \begin{gathered}
   \int_0^{\infty}\int_{\bbr^6}f \phi_t dx dv dt+\int_0^{\infty}\int_{\bbr^6}f v \cdot \nabla_{x}\phi dx dv dt\\+\int_0^{\infty}\int_{\bbr^6}\Big(L[f]+\bu-v \Big)\cdot \nabla_{v}\phi f dx dv dt
   +\int_{\bbr^6}f_0 \phi(0) dx dv=0,
 \end{gathered}
\]
for all $\phi(t,x,v)\in C_0^{\infty}([0,T)\times \bbr^3 \times \bbr^3)$;
\[
 \begin{gathered}
   \int_0^{\infty}\int_{\bbr^3}\bu\cdot \boldsymbol{\psi}_t dx dt+\int_0^{\infty}\int_{\bbr^3}\nabla \bu : \nabla \boldsymbol{\psi} dx dt -\int_0^{\infty}\int_{\bbr^3} \bu \cdot \nabla \bu \cdot \boldsymbol{\psi} dx dt
   \\+\int_0^{\infty}\int_{\bbr^6}(v-\bu) \cdot \boldsymbol{\psi}f dx dv dt +\int_{\bbr^3}\bu_0 \cdot\boldsymbol{\psi}(0) dx=0,
 \end{gathered}
\]
for all $\boldsymbol{\psi}(t,x)\in C_{0,\sigma}^{\infty}([0,T)\times \bbr^3)$, where
\[
 C_{0,\sigma}^{\infty}([0,T)\times \bbr^3):=\bigg\{\boldsymbol{\psi}(t,x): \ \boldsymbol{\psi}(t,x)\in C_0^{\infty}([0,T)\times \bbr^3), \ \nabla \cdot \boldsymbol{\psi}=0 \bigg\}.
\]
\end{definition}
We define the particle density as
\[
 \rho(t,x):=\int_{\bbr^3}f(t,x,v)dv,
\]
with the initial value $\rho_0(x):=\rho(0,x)$. The energy of the system \eqref{eq-cs-ns} is defined as
\[
 E(t):=\frac12 \|\bu(t)\|_{L^2}^2 +\frac12 \int_{\bbr^3}\int_{\bbr^3}f(t,x,v)|v|^2dxdv,
\]
with the initial energy $E_0:=E(0)$. If the initial fluid velocity $\bu_0(x)\in H^1(\bbr^3)$, it follows from the Sobolev embedding that
\[
 \|\bu_0\|_{L^3}\le C \|\bu_0\|_{H^1},\quad \text{for some constant $C>0$.}
\]
Denote by $B(R_0)$ the ball centered at the origin with a radius $R_0$. The bound of $v$-support of $f(t,x, v)$ at the time $t$ is defined as
\[
  R(t):=\sup \Big\{|v|: \ v \in \text{supp$f(t,x,\cdot)$ for a.e. $x \in \bbr^3$}\Big \}.
\]
If $f_0(x,v) \in H_{\omega}^1(\bbr^3 \times \bbr^3)\cap L^{\infty}(\bbr^3 \times \bbr^3)$ and $\text{supp}_{v}f_0(x,\cdot)\subseteq B(R_0)$ for a.e. $x \in \bbr^3$, then we have
\[
  \|\rho_0\|_{L^1}=\int_{\bbr^3}\int_{\bbr^3}f_0(x,v)dxdv\le C R_0^{\frac32} \|f_0\|_{L_{\omega}^2},
\]
and
\[
  \|\rho_0\|_{L^{\infty}}=\left\|\int_{\bbr^3}f_0(x,v)dv \right\|_{L^{\infty}}\le C R_0^3 \|f_0\|_{L^{\infty}},
\]
for some constant $C>0$. In terms of the above notations, the results of this paper are summarized as follows.
\begin{theorem} \label{thm-exist}
Let $0<R_0<\infty$. Assume that $f_0(x,v)\ge 0$, $f_0(x,v) \in H_{\omega}^1(\bbr^3 \times \bbr^3)\cap L^{\infty}(\bbr^3 \times \bbr^3)$, and $\bu_0(x) \in H^{1}(\bbr^3)$,
with $\nabla \cdot \bu_0=0$ and the $v$-support of $f_0(x,v)$ satisfying
\[
 \text{supp}_{v}f_0(x,\cdot)\subseteq B(R_0) \quad \text{for a.e. $x \in \bbr^3$}.
\]
If there is a sufficiently small constant $\varepsilon_0$ such that
\[
  C\|\bu_0\|_{L^3}^6+(1+\|\rho_0\|_{L^{\infty}})E_0 \le \varepsilon_0
\]
for some constant $C>0$ independent of the initial data, then the Cauchy problem \eqref{eq-cs-ns}-\eqref{eq-sys-inidata} admits a unique global-in-time strong solution in the sense of Definition \ref{def-stro}. Moreover, it holds that
\[
 \begin{aligned}
   &(1)\ \|f(t)\|_{H_{\omega}^1}\le \|f_0\|_{H_{\omega}^1}\exp\Big(C(1+t)\Big) \quad \text{and} \quad R(t)\le R_0+C,\\&\quad\, \text{where $C:=C(R_0, E_0, \|\rho_0\|_{L^{\infty}}, \|\bu_0\|_{H^1},\|\bu_0\|_{L^3})$};\\
   &(2)\ \|\bu(t)\|_{H^{1}}\le C\bigg(\|\nabla \bu_0\|_{L^2}+\Big(1+\|\rho_0\|_{L^{\infty}}^{\frac12}\Big)E_0^{\frac12}\bigg)\exp\left(C(\varepsilon_0+\varepsilon_0^{\frac12})\right),\\
   &\quad\, \text{for some constant $C>0$ independent of the initial data};\\
   &(3)\ \lim_{t \to \infty}\int_{\bbr^6} |v-\bu|^2 f dxdv =0.
 \end{aligned}
\]
\end{theorem}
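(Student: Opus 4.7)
The plan is to combine a local strong-solution construction, a hierarchy of uniform a priori estimates under the smallness hypothesis to extend the local solution globally, and Schonbek's Fourier splitting method to extract the decay claim (3). The overall scheme is a bootstrap: the smallness of the initial data is propagated in a critical norm for $\bu$, which is then fed into the maximal regularity estimate on the Stokes operator to prevent blowup, and in turn used to close the weighted energy estimates on $f$ via Gr\"onwall.

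First I would construct a local-in-time strong solution $(f,\bu,\nabla P)$ on some maximal interval $[0,T^*)$ by a standard linearization/iteration scheme, alternately solving the linear transport equation for $f$ with given drift $\bu$ and the linear (Navier--)Stokes problem for $\bu$ with drag source $\int_{\bbr^3}(v-\bu)f\,dv$; the weight $\omega$ with $\alpha>\tfrac32$ ensures the coupling integrals are well defined and continuous, as already carried out in \cite{jin2019local}\cite{Jin2021}. The continuation criterion reduces to keeping $\bu$ in $L^1(0,T;W^{1,\infty}(\bbr^3))$ together with finiteness of $R(t)$.

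Second, I would close the bootstrap. The basic energy identity reads
\[
 \frac{d}{dt}E(t)+\|\nabla \bu\|_{L^2}^2+\int_{\bbr^6}|v-\bu|^2 f\,dxdv+\mathcal{D}_{\mathrm{CS}}[f]=0,
\]
with a nonnegative Cucker--Smale dissipation $\mathcal{D}_{\mathrm{CS}}[f]\ge 0$, so $E(t)\le E_0$. Integrating the kinetic equation in $v$ shows that $\rho$ solves the continuity equation $\rho_t+\nabla_x\cdot(\bu\rho)=0$ with divergence-free drift, yielding $\|\rho(t)\|_{L^\infty}\le\|\rho_0\|_{L^\infty}$, and the characteristic ODE $\dot v=L[f]+\bu-v$ produces $R(t)\le R_0+C$ once $\bu$ is controlled in $L^1_tL^\infty_x$. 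To propagate the smallness of $\|\bu\|_{L^3}$, I would use the mild formulation of NS and treat $\int(v-\bu)f\,dv$ as a forcing in an appropriate critical Lorentz/Lebesgue space, whose norm is controlled by $\|\rho\|_{L^\infty}$ and the energy, then run a Kato-type fixed point in $L^\infty_tL^3_x\cap L^q_tL^p_x$. The structure $C\|\bu_0\|_{L^3}^6+(1+\|\rho_0\|_{L^\infty})E_0\le\varepsilon_0$ is tailored precisely to absorb both the bilinear term $\bu\otimes\bu$ and the drag source through this iteration. With $\|\bu\|_{L^\infty_tL^3_x}$ in hand, the maximal regularity estimate for the Stokes system upgrades $\bu$ to $L^\infty_tH^1\cap L^2_tD^2$, from which $\bu\in L^1(0,T;W^{1,\infty})$ via Sobolev embedding, preventing blowup and establishing part (2). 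Weighted energy estimates on $f,\nabla_xf,\nabla_vf$ in the spirit of \cite{Jin2018} then close part (1) through Gr\"onwall.

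Third, for claim (3) I would apply Fourier splitting to the fluid energy: writing
\[
 \|\nabla \bu\|_{L^2}^2\ge\tfrac{k}{1+t}\Big(\|\bu\|_{L^2}^2-\!\!\int_{|\xi|^2\le k/(1+t)}\!\!|\widehat{\bu}|^2\,d\xi\Big)
\]
in the NS energy inequality, and bounding the low-frequency integral through the Duhamel representation using the uniform moment bounds on $f$ supplied by the weight $\omega$, one obtains $\|\bu(t)\|_{L^2}\to 0$. Since the energy identity furnishes $\int_0^\infty\int_{\bbr^6}|v-\bu|^2 f\,dxdvdt\le E_0$, arguing via uniform continuity in $t$ of $\int|v-\bu|^2 f\,dxdv$ (ensured by the a priori bounds on $\bu_t$, $R(t)$, and $\rho$) together with the decay of $\|\bu\|_{L^2}$ then forces $\int|v-\bu|^2 f\,dxdv\to 0$. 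The main obstacle I anticipate is the simultaneous closure of the critical $L^3$ mild estimate and the maximal regularity estimate in the presence of the kinetic coupling: the drag term must sit in a space compatible with both frameworks, and one has to verify that the Gr\"onwall factor in the weighted energy estimates on $f$ does not destroy the smallness inherited from $\varepsilon_0$, which is what dictates the precise form of the smallness condition.
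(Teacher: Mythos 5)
Your overall architecture (local existence by iteration, a priori bounds under smallness, continuation, then a separate argument for the decay claim) matches the paper, but the two places where you deviate are exactly where the proposal has gaps. First, you propose to propagate the smallness of $\|\bu\|_{L^3}$ by a Kato-type mild-solution fixed point, treating the drag force $\bh=\int_{\bbr^3}(v-\bu)f\,dv$ as a forcing in a critical space. The only global-in-time control available on $\bh$ is $\int_0^\infty\|\bh\|_{L^p}^2\,dt\le C(1+\|\rho_0\|_{L^\infty})E_0$ for $1\le p\le 2$, and the Duhamel term $\int_0^t e^{(t-s)\Delta}\mathbb{P}\bh\,ds$ estimated in $L^3_x$ from such data picks up a non-integrable singularity (for $\bh\in L^2_t L^{3/2}_x$ one faces $\int_0^t(t-s)^{-1}ds$) or a factor growing in $t$ (for $\bh\in L^2_tL^2_x$), so the fixed point does not close uniformly in time. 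You correctly identify this compatibility issue as the main obstacle but do not resolve it; the paper avoids it entirely by running an $L^3$ \emph{energy} estimate on the split $\bu=\bar\bu+\bw$, where the key device is the pressure decomposition $P=P_1+P_2$ with $\Delta P_1=-\nabla\cdot(\bu\cdot\nabla\bu)$ and $\Delta P_2=\nabla\cdot\bh$: one integrates by parts only on $P_1$ and estimates $\nabla P_2$ directly by Calder\'on--Zygmund, so that $\bh$ enters only through $\|\bh\|_{L^{3/2}}^2$, which is exactly the quantity the energy identity makes globally integrable. Relatedly, even granting $\|\bu\|_{L^\infty_tL^3}$ small, maximal regularity alone does not yield $\sup_t\|\nabla\bu\|_{L^2}$; the paper first derives the Serrin quantity $\|\bu\|_{L^3(0,T;L^9)}$ from the $L^3$-level dissipation, uses it in an $H^1$ energy estimate to control $\sup_t\|\nabla\bu\|_{L^2}$ and $\int_0^T\|\nabla^2\bu\|_{L^2}^2dt$, and only then applies maximal regularity to get $\|\nabla^2\bu\|_{L^s(0,T;L^q)}$ and hence $\int_0^T\|\bu\|_{W^{1,\infty}}dt$. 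This intermediate step needs to be supplied in your scheme.

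Second, for claim (3) you invoke Fourier splitting to show $\|\bu(t)\|_{L^2}\to 0$, but the low-frequency estimate in that method requires $\hat\bu_0\in L^\infty$, i.e.\ $\bu_0\in L^1(\bbr^3)$, which is \emph{not} assumed in Theorem \ref{thm-exist} (it is the additional hypothesis of Theorem \ref{thm-beha}). Moreover the decay of $\|\bu\|_{L^2}$ is not needed for (3). Your fallback -- time-integrability of $g(t):=\int_{\bbr^6}|v-\bu|^2f\,dxdv$ plus uniform continuity -- is also not directly available, since $\frac{d}{dt}g$ involves $\|\Delta\bu(t)\|_{L^2}$ and $\|\nabla P(t)\|_{L^2}$, which are only square-integrable in time, not bounded. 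The paper instead derives the damped inequality $\frac{d}{dt}g\le -g+D(t)$ with $D\in L^1(0,\infty)$ (the latter via the energy identity together with the $L^2$-maximal regularity bound on $\|\Delta\bu\|_{L^2}^2+\|\nabla P\|_{L^2}^2$), and concludes $g(t)\to 0$ from $\int_0^t e^{-(t-s)}D(s)\,ds\to 0$; your argument should be repaired along these lines rather than via Fourier splitting or uniform continuity.
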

If $\bu_0(x)$ is integrable, except for the conditions in Theorem \ref{thm-exist}, then the quantitative decay rate of the system can be obtained.
\begin{theorem} \label{thm-beha}
Assume initial data satisfy the conditions in Theorem \ref{thm-exist}. If $\bu_0(x)\in L^1(\bbr^3)$ in addition, then there holds
\[
 \begin{aligned}
   &(1)\ E(t)\le C\Big(1+t\Big)^{-\frac32} \quad \text{and} \quad \|\bu(t)\|_{L^{2}}\le C\Big(1+t\Big)^{-\frac34};\\
   &(2)\ \int_{\bbr^6} |v-\bu|^2 f dxdv \le C\Big(1+t\Big)^{-\frac32},
 \end{aligned}
\]
where $C:=C(E_0, \|\rho_0\|_{L^\infty}, \|\bu_0\|_{L^1})$.
\end{theorem}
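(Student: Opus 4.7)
The plan is to combine the basic energy identity for the coupled system with the Fourier splitting technique of Schonbek, adapted so that both the fluid $L^2$-norm and the second kinetic moment are controlled at once. First I would test the kinetic equation against $|v|^2/2$ and the fluid equation against $\bu$ and add the two identities. The $x$-transport term vanishes by $\nabla\cdot\bu=0$; the Cucker--Smale term symmetrises to
\[
\int_{\bbr^6} v\cdot L[f]\,f\,dxdv=-\tfrac12\iiiint\varphi(|x-y|)|v-v^*|^2 f f^*\,dxdydvdv^*;
\]
and the drag/force exchange cancels the cross-term, yielding
\begin{equation*}
\frac{d}{dt}E(t)+\|\nabla\bu\|_{L^2}^2+\int_{\bbr^6}|v-\bu|^2 f\,dxdv+\tfrac12\iiiint\varphi(|x-y|)|v-v^*|^2 ff^*=0.
\end{equation*}
In particular $E(t)$ is non-increasing, $\|\bu\|_{L^2}^2\le 2E_0$, and mass is conserved. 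Together with the bounded $v$-support from Theorem \ref{thm-exist} this also gives $\|\rho(t)\|_{L^\infty}\le C$ uniformly in $t$.

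Next I would convert the fluid dissipation into decay information on $E$ via the elementary inequality $|v|^2\le 2|v-\bu|^2+2|\bu|^2$, which yields
\[
2E(t)\le (1+2\|\rho\|_{L^\infty})\|\bu\|_{L^2}^2+2\int_{\bbr^6}|v-\bu|^2 f\,dxdv,
\]
and the Plancherel splitting $\|\nabla\bu\|_{L^2}^2\ge g(t)^2\|\bu\|_{L^2}^2-g(t)^2\int_{|\xi|\le g(t)}|\hat\bu|^2\,d\xi$. Choosing $g(t)^2=k/(1+t)$ with $k$ sufficiently large (depending on $\|\rho_0\|_{L^\infty}$), the energy identity and the two inequalities combine to give
\[
E'(t)+\frac{c}{1+t}E(t)\le\frac{C}{1+t}\int_{|\xi|\le g(t)}|\hat\bu(\xi,t)|^2\,d\xi,
\]
with $c>3/2$. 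Multiplying by $(1+t)^c$ and integrating reduces the proof of (1) to showing that the right-hand side, whose $\xi$-volume is $\sim(1+t)^{-3/2}$, contributes no worse than $(1+t)^{-3/2}$.

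To bound the low-frequency mass of $\hat\bu$, I would apply Leray's projection $\mathbb{P}$ and use the Duhamel representation
\[
|\hat\bu(\xi,t)|\le|\hat\bu_0(\xi)|+\int_0^t e^{-|\xi|^2(t-s)}\Big(|\xi|\,\|\bu\otimes\bu\|_{L^1}+\|\bj(s)\|_{L^1}\Big)ds,\qquad \bj:=\int_{\bbr^3}(v-\bu)f\,dv.
\]
The hypothesis $\bu_0\in L^1(\bbr^3)$ gives $|\hat\bu_0|\le\|\bu_0\|_{L^1}$; the inequality $\|\bu\otimes\bu\|_{L^1}\le\|\bu\|_{L^2}^2\le 2E$ controls the nonlinearity; and the Cauchy--Schwarz bound $|\bj(x)|^2\le\rho(x)\int|v-\bu|^2 f\,dv$, combined with $\|\rho\|_{L^1}=\|\rho_0\|_{L^1}$, yields $\|\bj\|_{L^1}\le C(E^{1/2}+\|\bu\|_{L^2})$. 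A standard Schonbek-style bootstrap -- first using the uniform bound $E\le E_0$ to extract an initial decay $E(t)\le C(1+t)^{-\gamma_0}$, then reinjecting this rate into the Duhamel bound to improve $\gamma_0$, and iterating -- saturates at $E(t)\le C(1+t)^{-3/2}$, which proves (1), while (2) follows because
\[
\int_{\bbr^6}|v-\bu|^2 f\,dxdv\le 2\int_{\bbr^6}|v|^2 f\,dxdv+2\|\rho\|_{L^\infty}\|\bu\|_{L^2}^2\le C\,E(t).
\]

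The main obstacle is the interaction between the Schonbek bootstrap and the coupling term: unlike for pure Navier--Stokes, $\|\bj\|_{L^1}$ does not decay a priori faster than $E^{1/2}$, so every iteration step must simultaneously improve the decay of $E$ and of $\bj$, rather than treating the drag forcing as a fast-decaying perturbation. A secondary technical point is verifying that $k$ can be chosen large enough that the coercivity constant $c$ produced by the kinetic-energy/Fourier-splitting interplay exceeds $3/2$; this is possible because $c$ depends linearly on $k$ with a prefactor $1/(1+2\|\rho_0\|_{L^\infty})$.
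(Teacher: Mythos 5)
Your skeleton coincides with the paper's: the combined energy identity, the observation that $E(t)\le C(1+\|\rho_0\|_{L^\infty})\big(\|\bu\|_{L^2}^2+\int_{\bbr^6}|v-\bu|^2f\,dxdv\big)$ so that the Fourier-split dissipation controls all of $E$, the Duhamel bound on the low frequencies of $\hat\bu$, and the deduction of (2) from (1). The gap sits exactly at the point you yourself flag as ``the main obstacle'' and then wave away: the assertion that a standard Schonbek iteration, fed only with the pointwise bound $\|\bj\|_{L^1}\le CE^{1/2}$, saturates at the rate $3/2$. It does not. If $E(s)\le C(1+s)^{-\gamma}$ with $\gamma<2$, then $\int_0^t\|\bj(s)\|_{L^1}\,ds\le C(1+t)^{1-\gamma/2}$, so the low-frequency mass is only $O\big((1+t)^{-3/2}(1+t)^{2-\gamma}\big)=O\big((1+t)^{1/2-\gamma}\big)$, and the Gronwall step returns $E(t)\le C(1+t)^{1/2-\gamma}$: the iteration map is $\gamma\mapsto\gamma-\tfrac12$, which moves in the wrong direction, and starting from the uniform bound $\gamma=0$ it never gets off the ground.

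The missing idea, which is the heart of the paper's argument, is a time-weighted dissipation estimate. Under the bootstrap hypothesis $E(t)<K(1+t)^{-9/8}$ on $[0,T_m)$, multiplying the energy inequality by $(1+t)^{17/16}$ and integrating gives $\int_0^{T_m}(1+t)^{17/16}\int_{\bbr^6}|v-\bu|^2f\,dxdv\,dt\le E_0+CK$; Cauchy--Schwarz in time against the integrable weight $(1+s)^{-17/16}$ then yields $\int_0^{T_m}\|\bh(s)\|_{L^1}\,ds\le C(1+K)^{1/2}$, a quantity that is \emph{uniformly bounded}, not slowly growing. This is strictly stronger than anything the pointwise bound $\|\bh\|_{L^1}\le CE^{1/2}$ can give, because it exploits the time-integrability of $\int|v-\bu|^2f$ coming from the dissipation identity. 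Only with it does the Duhamel bound deliver $|\hat\bu(t,\xi)|^2\le C+CK+CK^2|\xi|^2$, hence a low-frequency contribution $CK(1+t)^{-3/2}+CK^2(1+t)^{-5/2}$ and $E(t)\le CK(1+t)^{-3/2}+CK^2(1+t)^{-5/2}$; the contradiction at $T_m$ then closes because $1+T_m\ge(K/E_0)^{8/9}$ forces $E(T_m)\le CK^{-2/9}K(1+T_m)^{-9/8}$, which beats $K(1+T_m)^{-9/8}$ for $K$ large. You need to supply this weighted estimate (or an equivalent device); as written, your bootstrap does not close.
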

\begin{remark}\label{rem-thm}
The transport term in $\eqref{eq-cs-ns}_1$ is taken as $\bu \cdot \nabla_xf$, in order to derive a uniform-in-time bound on the particle density $\rho(t,x)$, which plays a crucial role in the analysis of global existence and large time behaviors of strong solutions. As for the usual transport term $v \cdot \nabla_xf$, particle concentration in phase space seems unavoidable. Whether Theorem \ref{thm-exist} holds or not in this setting is still unknown, due to lack of a uniform-in-time bound on the particle density.
\end{remark}
\begin{remark}
The regularity of $\bu_0$ is optimal in terms of existence of strong solutions to the incompressible Navier--Stokes equations. The small global strong solution is obtained in the critical $L^3(\bbr^3)$ space, based on an energy-type estimate. As is well-known, the best critical space to date for existence of small global solutions to the three dimensional incompressible Navier--Stokes equations is $BMO^{-1}$. Whether or not the same type results hold for small $\bu_0$ in $BMO^{-1}$ is still a problem deserving our further endeavor.
\end{remark}
\begin{remark}
Compared with decay results in the three dimensional incompressible Navier--Stokes equations, the decay rates in Theorem \ref{thm-beha} are optimal. Since the smallness assumption is only used to derive global strong solutions, which does not play a role in the analysis of large time behaviors, we conjecture that Theorem \ref{thm-beha} also holds for large weak solutions to the system \eqref{eq-cs-ns}-\eqref{eq-sys-inidata}.
\end{remark}

Based on our analysis on local strong solutions to the coupled system, it suffices to prove $\int_0^T \|\bu(t)\|_{W^{1,\infty}} dt<\infty$ for all $T>0$, to extend local solutions to infinity. This estimate can be obtained by interpolation, once we have the estimate on $\|\nabla^2\bu\|_{L^s(0,T;L^q)}$, $1<s<2$, $3<q<6$.
In fact, the estimate of $\|\nabla^2\bu\|_{L^s(0,T;L^q)}$ can be obtained, by employing the maximal regularity estimate on the Stokes equations. Nevertheless, the incompressible Navier--Stokes equations contain the convective term $\bu\cdot\nabla\bu$. To deal with the convective term, we need  the estimate on $\sup_{0\le t\le T}\|\nabla\bu(t)\|_{L^2}$. However, the estimate on $\sup_{0\le t\le T}\|\nabla\bu(t)\|_{L^2}$ in the three dimensional incompressible Navier--Stokes equations is still open for large initial data. Thus this estimate by far can only be hoped for under small initial data regime. In this paper, we generalize the small global existence result in the critical $L^3(\bbr^3)$ space for the incompressible Navier--Stokes equations to the coupled system \eqref{eq-cs-ns}-\eqref{eq-sys-inidata}. Using the splitting approach, we decompose $\bu=\bar\bu +\bw$, with $\bar\bu$ and $\bw$ determined by the following equations:
\begin{equation*} \label{eq-ns-baru}
     \begin{dcases}
         \bar\bu_t+ \bu \cdot \nabla \bar\bu =\Delta \bar\bu, \\
         \bar\bu|_{t=0}=\bu_0,
     \end{dcases}
\end{equation*}
and
\begin{equation*} \label{eq-ns-w}
     \begin{dcases}
         \bw_t+ \bu \cdot \nabla \bw +\nabla P=\Delta \bw +\int_{\bbr^3}(v-\bu)fdv, \\
         \bw|_{t=0}=0,
     \end{dcases}
\end{equation*}
respectively. In the process of the $L^3$-type energy estimate, the estimate on $\bar\bu$ is standard under the incompressible condition $\nabla\cdot\bu=0$. The difficulty mainly arises from the term $\int_{\bbr^3}3|\bw|\bw\cdot\nabla P dx$, where the pressure $P$ is determined by the following equation:
\[
 \Delta P=-\nabla\cdot(\bu\cdot\nabla\bu)+\nabla\cdot\bh, \quad \text{with $\bh:=\int_{\bbr^3}(v-\bu)f dv$.}
\]
Appearance of the additional term $\nabla\cdot\bh$ disables us to use integration by parts to estimate $\int_{\bbr^3}3|\bw|\bw\cdot\nabla P dx$. We overcome this difficulty by decomposing $P=P_1+P_2$, with $P_1$ and $P_2$ governed by
\[
 \Delta P_1=-\nabla\cdot(\bu\cdot\nabla\bu)\quad\text{and}\quad \Delta P_2=\nabla\cdot\bh,
\]
respectively. Then it holds that
\[
 \int_{\bbr^3}3|\bw|\bw\cdot\nabla P dx=\int_{\bbr^3}3|\bw|\bw\cdot\nabla P_1 dx+\int_{\bbr^3}3|\bw|\bw\cdot\nabla P_2 dx.
\]
The first term in the right-hand side of the above equation is dealt with similarly as in the incompressible Navier--Stokes equations, i.e., using integration by parts and the inequality
\[
 \|P_1\|_{L^p}\le C\|\bu\|_{L^{2p}}^{2}\quad \text{for some constant $C>0$ and $1<p<\infty$}.
\]
The second term is estimated directly, using H\"older's inequality and the fact that
\[
 \|\nabla P_2\|_{L^p}\le C\|\bh\|_{L^{p}}\quad \text{for some constant $C>0$ and $1<p<\infty$}.
\]
Having the uniform-in-time bound on the particle density $\rho(t,x)$, along with the elementary energy estimate on the system \eqref{eq-cs-ns}-\eqref{eq-sys-inidata}, the quantity $\int_0^T \|\bh(t)\|_{L^p}^2dt$ for $1\le p\le 2$ can be controlled by the initial energy $E_0$ times $1+\|\rho_0\|_{L^{\infty}}$ for all $T>0$. Then we can obtain the Serrin condition $\bu\in L^3(0,T;L^9)$ for all $T>0$, under some smallness assumption on initial data, which leads to estimate on $\sup_{0\le t\le T}\|\nabla\bu(t)\|_{L^2}$ for all $T>0$.

In analysis of large time behaviors to the coupled system, we use the Fourier splitting method. Nevertheless, we cannot directly apply this method to the fluid equations, since
\[
 \frac12\frac{d}{dt}\|\bu\|_{L^2}^2 +\|\nabla\bu\|_{L^2}^2=\int_{\bbr^6}(v-\bu)\cdot\bu f dxdv,
\]
and we do not have enough decay estimates on the coupling term $\int_{\bbr^6}(v-\bu)\cdot\bu f dxdv$. Combining the kinetic equation, we deduce that
\[
 \frac{d}{dt}E(t) +\|\nabla\bu\|_{L^2}^2+\int_{\bbr^6}|v-\bu|^2 f dxdv \le 0.
\]
Our main observation is that
\[
  E(t)\le (1+\|\rho_0\|_{L^{\infty}})\left(\|\bu\|_{L^2}^2+\int_{\bbr^6}|v-\bu|^2 f dxdv\right),
\]
under uniform-in-time bound on the particle density $\rho(t,x)$. Then we use the Fourier splitting method to derive the inequality
\[
 \frac{d}{dt}E(t) +\frac{c^2}{(t+c^2)(1+\|\rho_0\|_{L^{\infty}})}E(t) \le \frac{c^2}{t+c^2}\int_{|\xi|\le c(t+c^2)^{-\frac12}} |\hat\bu(t,\xi)|^2d\xi,
\]
where the positive constant $c$ satisfies $c^2=3(1+\|\rho_0\|_{L^{\infty}})$. However, due to appearance of the coupling term, we are still faced with some difficulties in estimating the low frequency of $\bu(t,x)$. Fortunately, these difficulties can be overcome by a subtle bootstrap argument. It follows from the elementary energy estimate that $E(t)\le E_0$ for all $t\ge 0$. Therefore, we can make the bootstrap assumption that
\[
 E(t)<K(1+t)^{-\frac98} \quad \text{in $[0,T)$}
\]
for some suitably large constants $K$ and $T$. Under this assumption, the coupling term can be controlled by using a time-weighted energy estimate on the coupled system
\eqref{eq-cs-ns}-\eqref{eq-sys-inidata}. Then we can improve estimate on the low frequency of $\bu(t,x)$, which leads to a better decay rate on $E(t)$, i.e.,
\[
 E(t)\le CK(1+t)^{-\frac32}+CK^2(1+t)^{-\frac52} \quad \text{in $[0,T)$}
\]
for some constant $C>0$, depending only on $E_0$, $\|\rho_0\|_{L^\infty}$ and $\|\bu_0\|_{L^1}$; see details in Sect. \ref{sec-glob-exst}. Taking $K$ suitably large, we can prove that the supremum among all the above $T$ is $\infty$ by continuity argument. Thus we can improve the bootstrap assumption and show that
\[
 E(t)\le C(1+t)^{-\frac32} \quad \text{for all $t\ge 0$},
\]
where $C:=C(E_0, \|\rho_0\|_{L^\infty}, \|\bu_0\|_{L^1})$. Then the decay rates of $\|\bu(t)\|_{L^2}$ and $\int_{\bbr^6} |v-\bu|^2 f dxdv$ follow easily.
The proof is subtle and robust. We believe that the idea can be also used in some other coupled models.

The rest of the paper is organized as follows. In Sect. \ref{sec-preli}, we present some preliminary results used in the subsequent analysis. In Sect. \ref{set-loc-exist}, we construct local strong solutions to the coupled model by iteration on the linearized system. In Sect. \ref{sec-apriori}, we derive some a priori estimates on classical solutions, which are used to extend local strong solutions. Sect. \ref{sec-glob-exst} is devoted to the proof of our theorems.
\vskip 0.3cm
\noindent\textbf{Notation}. Throughout the paper, $c, C$  represent a general positive constant that may depend on $\varphi$, $\varphi'$, and the initial data. We write $C(\star)$ to emphasize that $C$ depends on $\star$. Both $C$ and $C(\star)$ may take different values in different expressions.

%
%
\section{Preliminaries}\label{sec-preli}
\setcounter{equation}{0}
In this section, we present some preliminaries that will be used in the following analysis. The first result concerns the well-posedness and estimates on strong solutions to the kinetic Cucker--Smale model with a coupling term, provided that the fluid velocity is given in some function space. It will be used in the construction of local strong solutions to the system \eqref{eq-cs-ns}-\eqref{eq-sys-inidata}. While the second result is the maximal regularity estimate on the Stokes equations. This estimate is crucially used in the estimate of $\int_0^T \|\bu(t)\|_{W^{1,\infty}} dt<\infty$ for all $T>0$, in order to extend local strong solutions to infinity.

\subsection{The kinetic Cucker--Smale model with a coupling term}
When an ensemble of particles is immersed in a fluid, due to influence of ambient fluid, the alignment term $L[f]f$ is  replaced by $L[f]f+(\bu-v)f$, in order for conservation of mass and momentum of the system. In this paper, we suppose particles are transported by the fluid velocity $\bu$. Then the transport term in the kinetic equation becomes $\bu\cdot\nabla_xf$. Given $\bu(t,x)\in C([0,T];H^{1}(\bbr^3))\cap L^2(0,T;D^{2}(\bbr^3)) \cap L^s(0,T;D^{2,q}(\bbr^3))$,$1<s<2$, $3<q<6$, with $\nabla\cdot\bu=0$, it follows from the Sobolev imbedding that $\bu(t,x)\in L^1(0,T;W^{1,\infty}(\bbr^3))$.
Consider
\begin{equation} \label{eq-kine-cs}
     \begin{dcases}
         f_t + \bu \cdot \nabla_{x} f+ \nabla_{v} \cdot (L[f]f+(\bu-v)f)=0,\\
         f|_{t=0}=f_0(x,v),
     \end{dcases}
\end{equation}
in $[0,T]\times \bbr^3 \times \bbr^3$. Define
\[
 a(t,x):=\int_{\bbr^{6}} \varphi(|x-y|)f(t, y,v^*) dy dv^*,
\]
\[
 \mathbf{b} (t,x):=\int_{\bbr^{6}} \varphi(|x-y|)f(t, y,v^*) v^* dy dv^*.
\]
Recall that the bound of $v$-support of $f(t,x, v)$ at the time $t$ is defined as
\[
  R(t):=\sup \Big\{|v|: \ v \in \text{supp$f(t,x,\cdot)$ for a.e. $x \in \bbr^3$}\Big \}.
\]
Following the line of proof of Proposition 2.1 in \cite{jin2019local}, we have the following results.
\begin{proposition}\label{prop-kine-cs-wp}
Let $0<R_0, T<\infty$. Assume $f_0(x,v) \ge 0$, $f_0(x,v) \in H_{\omega}^1(\bbr^3 \times \bbr^3)$, and $\text{supp}_{v}f_0(x,\cdot)\subseteq B(R_0)$ for a.e. $x \in \bbr^3$. Given $\bu(t,x)\in C([0,T];H^{1}(\bbr^3))\cap L^2(0,T;D^{2}(\bbr^3)) \cap L^s(0,T;D^{2,q}(\bbr^3))$,$1<s<2$, $3<q<6$, with $\nabla\cdot\bu=0$, there exists a unique non-negative strong solution $f(t, x, v)\in C([0,T];H_{\omega}^1(\bbr^3 \times \bbr^3))$ to \eqref{eq-kine-cs}. Moreover,
\[
 \begin{aligned}
   (1)& \ R(t)\le R_0+\int_0^t(\|\mathbf{b}(\tau)\|_{L^{\infty}}+\|\bu(\tau)\|_{L^{\infty}})d\tau,\quad 0\le t \le T ;\\
   (2)& \ \|f(t)\|_{H_{\omega}^1}\le \|f_0\|_{H_{\omega}^1}\exp \left( C \int_0^t \Big(1+R(\tau)+\|\bu(\tau)\|_{W^{1,\infty}} \Big) d \tau \right), \quad 0\le t \le T,
 \end{aligned}
\]
where $C:=C(\|\varphi\|_{C^1}, \|f_0\|_{L^1})$.
\end{proposition}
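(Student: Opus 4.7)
The plan is to follow the Picard iteration framework from Jin \cite{jin2019local}, exploiting that $L[f](t,x,v)=\mathbf{b}(t,x)-a(t,x)v$ is affine in $v$. I would set $f^0\equiv f_0$ and, given $f^n$, define $f^{n+1}$ as the unique non-negative solution of the linear transport equation
\begin{equation*}
    f^{n+1}_t + \bu \cdot \nabla_x f^{n+1} + \nabla_v \cdot \big((L[f^n]+\bu-v)f^{n+1}\big) = 0,\qquad f^{n+1}\big|_{t=0}=f_0,
\end{equation*}
whose characteristics $\dot X=\bu(t,X)$, $\dot V=\mathbf{b}^n(t,X)+\bu(t,X)-(1+a^n(t,X))V$ are well-posed on $[0,T]$ since the embedding $D^{2,q}\hookrightarrow W^{1,\infty}$ for $q>3$ gives $\bu\in L^1(0,T;W^{1,\infty}(\bbr^3))$. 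Integrating along characteristics produces the explicit formula $f^{n+1}(t,X(t),V(t))=f_0(X(0),V(0))\exp\bigl(\int_0^t(3+3a^n)\,ds\bigr)$, which yields both non-negativity and a pointwise $L^\infty$ control.

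Next I would derive the two announced estimates. Dotting the $V$-equation with $V/|V|$ and using $a^n\ge 0$ gives $\tfrac{d|V|}{dt}\le\|\mathbf{b}^n(t)\|_{L^\infty}+\|\bu(t)\|_{L^\infty}$, which, passed to the limit, supplies the support bound for $R^{n+1}$. For the weighted Sobolev estimate, apply the energy method to $f^{n+1}$, $\nabla_x f^{n+1}$, and $\nabla_v f^{n+1}$ tested against $\omega$. The incompressibility $\nabla\cdot\bu=0$ kills the bulk transport divergence, leaving only a weight-derivative term bounded by $C\|\bu\|_{L^\infty}\|f^{n+1}\|_{L_\omega^2}^2$ thanks to $|\nabla_x\omega|\le C\omega$, while $\nabla_v\cdot(L[f^n]+\bu-v)=-3(1+a^n)\le-3$ is dissipative. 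Differentiating in $x$ generates a commutator $(\partial_{x_i}\bu)\cdot\nabla_x f^{n+1}$ (absorbed by $\bu\in W^{1,\infty}$) and a term $\nabla_x L[f^n]\cdot\nabla_v f^{n+1}$ controlled via $|\nabla_x L[f^n]|\le C(1+R^n)\|f^n\|_{L^1_{x,v}}$, using $|\varphi'|\le 1$, conservation of mass, and the $v$-support bound; differentiating in $v$ is simpler since $\partial_{v_i}L[f^n]=-a^n e_i$. Summing gives the Gronwall inequality
\begin{equation*}
 \frac{d}{dt}\|f^{n+1}(t)\|_{H_\omega^1}^2\le C\big(1+R^n(t)+\|\bu(t)\|_{W^{1,\infty}}\big)\|f^{n+1}(t)\|_{H_\omega^1}^2,
\end{equation*}
closing both bounds uniformly in $n$ by induction.

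I would then prove convergence and uniqueness via an $L_\omega^2$-contraction for the differences $g^n:=f^{n+1}-f^n$, which satisfy a linear transport equation with source $\nabla_v\cdot(L[g^{n-1}]f^n)$. The hypothesis $\alpha>3/2$ guarantees $\int\omega^{-1}dx<\infty$, so Cauchy--Schwarz together with the uniform $v$-support produces $\|a[g^{n-1}]\|_{L^\infty}+\|\mathbf{b}[g^{n-1}]\|_{L^\infty}\le C(R)\|g^{n-1}\|_{L_\omega^2}$; an $L_\omega^2$-energy estimate then yields contraction on a short subinterval, and the uniform $H_\omega^1$-bound lets me iterate to cover $[0,T]$ by a standard continuation argument. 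Uniqueness is the same $L_\omega^2$-stability estimate applied to any pair of strong solutions.

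The hard part will be closing the $H_\omega^1$-estimate in tandem with the support estimate: because $L[f^n]$ grows linearly in $v$ and its $x$-derivative scales with the $v$-support of $f^n$, the two bounds are not independent and must be propagated simultaneously through the induction. The weight $\omega=(1+|x|^2)^\alpha$ with $\alpha>3/2$ is calibrated precisely so that the moments $a^n$ and $\mathbf{b}^n$ are controlled by the weighted $L^2$-norm via integrability of $\omega^{-1}$ combined with the $v$-support, and this is the structural ingredient enabling the simultaneous closure.
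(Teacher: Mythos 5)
Your proposal follows essentially the same route as the paper, which itself defers the proof to Proposition 2.1 of \cite{jin2019local}: a linearized iteration solved along the characteristics of $\dot X=\bu$, $\dot V=\mathbf{b}^n+\bu-(1+a^n)V$, the explicit exponential formula giving non-negativity, the ODE bound for the $v$-support, weighted $H^1_{\omega}$ energy estimates closed by Gronwall using $|\nabla_x\omega|\le C\omega$ and $\nabla\cdot\bu=0$, and an $L^2_{\omega}$ contraction exploiting $\alpha>\tfrac32$. One small correction: in the weighted $L^2$ identity the term $-\tfrac12\int\big(\nabla_v\cdot(L[f^n]+\bu-v)\big)f^2\omega\,dx\,dv=\tfrac32\int(1+a^n)f^2\omega\,dx\,dv$ is a growth term rather than a dissipative one, but since $a^n\le\|\varphi\|_{C^0}\|f_0\|_{L^1}$ it is absorbed into the Gronwall constant $C(\|\varphi\|_{C^1},\|f_0\|_{L^1})$ and your final estimates are unaffected.
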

From $f_0(x,v) \in H_{\omega}^1(\bbr^3 \times \bbr^3)$, we deduce that
\begin{equation} \label{eq-ini-f-lonenm}
 \|f_0\|_{L^1}=\int_{\bbr^6}f_0(x,v)\omega^{\frac12}(x)\omega^{-\frac12}(x)dx dv \le C(R_0)\|f_0\|_{L^2_{\omega}}.
\end{equation}
Integrating $\eqref{eq-kine-cs}_1$ over $[0,t]\times \bbr^3 \times \bbr^3$ $, 0<t\le T$, gives
\begin{equation} \label{eq-kin-conser-mass}
 \|f(t)\|_{L^1}=\|f_0\|_{L^1}.
\end{equation}
Without loss of generality, we suppose $\|f_0\|_{L^1}=1$. Using Cauchy's inequality, we have
\begin{equation}\label{eq-estb}
 \begin{aligned}
   |\mathbf{b}(t,x)| \le&  \left(\int_{\bbr^{6}} f(t, y,v^*) dy dv^* \right)^{\frac12}\left(\int_{\bbr^{6}} f(t, y, v^*) |v^*|^2 dy dv^* \right)^{\frac12}\\
   \le& \left(\int_{\bbr^{6}} f(t, y, v^*) |v^*|^2 dy dv^* \right)^{\frac12}, \quad 0\le t \le T.
 \end{aligned}
\end{equation}
Multiplying $\eqref{eq-kine-cs}_1$ by $|v|^2$, we obtain
\begin{equation} \label{eq-kin-cs-ener}
  \frac{\partial}{\partial t} (f |v|^2)+ \bu \cdot \nabla_{x}(f |v|^2)+\nabla_{v} \cdot \Big(L[f]f |v|^2+(\bu-v)f |v|^2\Big)=2fL[f]\cdot v+2f v \cdot (\bu-v).
\end{equation}
Integrating \eqref{eq-kin-cs-ener} over $\bbr^3 \times \bbr^3$ leads to
\[
 \begin{aligned}
   \frac{d}{dt}\int_{\bbr^6}f|v|^2 dx dv=&-\int_{\bbr^{12}}\varphi(|x-y|)f(t,x,v)f(t,y,v^*)|v^*-v|^2dy dv^* dx dv\\
     &-2\int_{\bbr^6}f|v|^2 dx dv+2\int_{\bbr^3}\int_{\bbr^3}f v dv \cdot \bu dx\\
     \le&-2\int_{\bbr^6}f|v|^2 dx dv+2\|\bu(t)\|_{L^{\infty}}\|f\|_{L^1}^{\frac12}\Bigg(\int_{\bbr^6}f|v|^2 dx dv\Bigg)^{\frac12}.
 \end{aligned}
\]
Solving the above Gronwall's inequality yields
\begin{equation}\label{eq-cs-twmonieq}
 \Bigg(\int_{\bbr^6}f(t, x, v)|v|^2 dx dv\Bigg)^{\frac12}\le R_0+\int_0^t \|\bu(\tau)\|_{L^{\infty}}d\tau, \quad 0\le t \le T.
\end{equation}
Combining \eqref{eq-estb} and \eqref{eq-cs-twmonieq} with Proposition \ref{prop-kine-cs-wp} (1), we can further estimate $R(t)$ as follows:
\begin{equation}\label{eq-cs-estvbd}
 R(t)\le \left(R_0+\int_0^t \|\bu(\tau)\|_{L^{\infty}}d\tau\right)(1+t),\quad 0\le t \le T.
\end{equation}

\subsection{Maximal regularity estimate on the Stokes equations}
Our analysis to the incompressible Navier--Stokes equations is based on the global-in-time maximal $L^p-L^q$ estimates for the Stokes equations. By global-in-time maximal $L^p-L^q$ estimates, we mean that for given $\bg(t,x)\in L^p(0,T;L^q(\bbr^3))$, there exists a unique solution
$(\bu,\nabla P)$ to the Stokes equations
\begin{equation} \label{eq-s}
     \begin{dcases}
         \bu_t+\nabla P=\Delta \bu +\bg, \\
         \nabla \cdot \bu=0,\\
         \bu|_{t=0}=0,
     \end{dcases}
\end{equation}
satisfying
\[
 \|\bu_t\|_{L^p(0,T;L^q)}+\|\nabla^2\bu\|_{L^p(0,T;L^q)}+\|\nabla P\|_{L^p(0,T;L^q)}\le C\|\bg\|_{L^p(0,T;L^q)},
\]
for some constant $C>0$ independent of $T$ and $\bg$. Pioneering results on maximal $L^p-L^p$ estimates are due to Solonnikov \cite{Solonnikov1977Estimates}. Later, by proving boundedness of the imaginary powers of the Stokes operator, and using the Dore--Venni Theorem \cite{Dore1987On}, Giga--Sohr \cite{giga1991abstract} extended these estimates to the mixed $L^p-L^q$ setting, with the above constant $C$ improved to be independent of $T$, i.e., global in time. For modern approach to the maximal regularity estimates from the point of view of evolution equations, we refer the reader to \cite{Pruss2016Moving}. In terms of the regularity of $\bu(t,x)$, the space of initial data $\bu_0(x)$, as the trace space of $\bu(t,x)$, can be characterized by the real interpolation space
\[
  \Big(W^{2,q}, L^q\Big)_{\frac1s,s}=B_{q,s}^{2-\frac2s}.
\]
For general initial data in the Besov space $B_{q,s}^{2-\frac2s}$, it is proved in \cite{giga1991abstract} that
\[
 \|\bu_t\|_{L^s(0,T;L^q)}+\|\nabla^2\bu\|_{L^s(0,T;L^q)}+\|\nabla P\|_{L^s(0,T;L^q)}\le C\bigg(\|\bu_0\|_{B_{q,s}^{2-\frac2s}}+\|\bg\|_{L^s(0,T;L^q)}\bigg).
\]
In this paper, we suppose
\begin{equation} \label{eq-sq-rela}
 1<s<2, \quad 3<q<6, \quad \text{and} \quad \frac2s+\frac3q>\frac52.
\end{equation}
It follows from the Sobolev embedding that
\begin{equation} \label{eq-s-inidaemb}
 \|\bu_0\|_{B_{q,s}^{2-\frac2s}}\le C\|\bu_0\|_{H^1}, \quad \text{for some constant $C>0$.}
\end{equation}
Combining Theorem 2.8 in \cite{giga1991abstract} with \eqref{eq-s-inidaemb}, we have the following proposition.

\begin{proposition}\label{prop-s}
Let $0<T\le\infty$, $1<s<2$, $3<q<6$, $\frac2s+\frac3q>\frac52$. Given $\bg(t,x)\in L^s(0,T;L^q(\bbr^3)$, $\bu_0(\bx)\in H^1(\bbr^3)$, there exists a unique solution
$(\bu,\nabla P)$ to the Stokes equations
\begin{equation} \label{eq-s-nzroini}
     \begin{dcases}
         \bu_t+\nabla P=\Delta \bu +\bg, \\
         \nabla \cdot \bu=0,\\
         \bu|_{t=0}=\bu_0,
     \end{dcases}
\end{equation}
such that for some constant $C>0$ independent of $T$ and $\bg$,
\[
 \|\bu_t\|_{L^s(0,T;L^q)}+\|\nabla^2\bu\|_{L^s(0,T;L^q)}+\|\nabla P\|_{L^s(0,T;L^q)}\le C\Big(\|\bu_0\|_{H^1}+\|\bg\|_{L^s(0,T;L^q)}\Big).
\]
If $\bg(t,x)\in L^2(0,T;L^2(\bbr^3)$, it holds similarly that
\[
 \|\bu_t\|_{L^2(0,T;L^2)}+\|\nabla^2\bu\|_{L^2(0,T;L^2)}+\|\nabla P\|_{L^2(0,T;L^2)}\le C\Big(\|\bu_0\|_{H^1}+\|\bg\|_{L^2(0,T;L^2)}\Big),
\]
for some constant $C>0$ independent of $T$ and $\bg$.
\end{proposition}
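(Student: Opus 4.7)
The plan is to reduce the proposition to the global-in-time maximal $L^s$-$L^q$ regularity theorem of Giga--Sohr (Theorem 2.8 in \cite{giga1991abstract}) and to close the remaining gap with a single Sobolev-Besov embedding. Giga--Sohr already handles the Stokes system with nonzero initial data, but the natural function class for $\bu_0$ in that theorem is the real-interpolation trace space $B_{q,s}^{2-2/s}$ rather than $H^1$. Since the subsequent local existence analysis in the present paper is built on $\bu_0\in H^1(\bbr^3)$, the whole point of Proposition \ref{prop-s} is to phrase the estimate directly in terms of this $H^1$-norm, which is where the interpretation of the index condition \eqref{eq-sq-rela} enters.

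First, I would quote the Giga--Sohr estimate verbatim: for $\bg\in L^s(0,T;L^q(\bbr^3))$ and divergence-free $\bu_0\in B_{q,s}^{2-2/s}(\bbr^3)$, the unique strong solution to \eqref{eq-s-nzroini} satisfies
\[
\|\bu_t\|_{L^s(0,T;L^q)}+\|\nabla^2\bu\|_{L^s(0,T;L^q)}+\|\nabla P\|_{L^s(0,T;L^q)}\le C\Big(\|\bu_0\|_{B_{q,s}^{2-2/s}}+\|\bg\|_{L^s(0,T;L^q)}\Big),
\]
with $C$ independent of $T$ and $\bg$. Next, I would verify the Sobolev-Besov embedding \eqref{eq-s-inidaemb}. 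Identifying $H^1(\bbr^3)=B^{1}_{2,2}(\bbr^3)$ and invoking the strict Besov embedding $B^{\sigma_1}_{p_1,r_1}\hookrightarrow B^{\sigma_0}_{p_0,r_0}$, which holds for any second indices as soon as $\sigma_1-3/p_1>\sigma_0-3/p_0$, the relevant numeric condition with $\sigma_1=1$, $p_1=2$, $\sigma_0=2-2/s$, $p_0=q$ simplifies to $1-3/2>(2-2/s)-3/q$, equivalently $2/s+3/q>5/2$, which is exactly the hypothesis \eqref{eq-sq-rela}. Concatenating this embedding with the Giga--Sohr estimate delivers the first assertion of the proposition.

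For the second ($L^2$-$L^2$) assertion, I would simply specialize the same theorem to $s=q=2$; here the trace space is $B^{1}_{2,2}=H^1$ and no further embedding is needed, so the bound reads directly in terms of $\|\bu_0\|_{H^1}$. I do not expect any genuine obstacle in this proof: the only real content is locating the correct mixed-norm statement of Giga--Sohr and checking that \eqref{eq-sq-rela} is precisely what makes the strict Besov embedding from $H^1$ into the trace space succeed. The $T$-independence of $C$ is automatic since the embedding step is purely spatial; the divergence-free condition on $\bu_0$ is likewise preserved since it too is purely spatial.
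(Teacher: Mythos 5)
Your proposal is correct and follows essentially the same route as the paper: both reduce the statement to Theorem 2.8 of Giga--Sohr with initial data in the trace space $B_{q,s}^{2-\frac2s}$ and then invoke the embedding $H^1(\bbr^3)\hookrightarrow B_{q,s}^{2-\frac2s}(\bbr^3)$, which is exactly \eqref{eq-s-inidaemb}. Your verification that the strict Besov embedding condition reduces to $\frac2s+\frac3q>\frac52$, and that the $L^2$--$L^2$ case needs no embedding since $B^{1}_{2,2}=H^1$, merely spells out details the paper leaves implicit.
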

%
%
\section{Construction of Local Strong Solutions}\label{set-loc-exist}
\setcounter{equation}{0}
This section is devoted to the construction of local strong solutions to the coupled system. Based on our previous results for the kinetic Cucker--Smale model, we first linearize the incompressible Navier--Stokes equations and construct approximate solutions by iteration. Then we show that there exists some $T_*>0$, depending only on initial data and model parameters, such that the approximate solutions are uniformly bounded in strong solution spaces on $[0, T_*]$. Then it is proved that the approximate solution sequence is a Cauchy sequence in a lower order regularity function space for some short time interval. Combining the uniform bound on the approximate solutions, we further demonstrate that the limit is the desired local strong solution, by employing functional analysis. The result in this section is summarized as follows.
\begin{proposition} \label{prop-loc-exist}
Let $0<R_0<\infty$. Assume that $f_0(x, v)\ge 0$, $f_0(x, v) \in H_{\omega}^1(\bbr^3 \times \bbr^3)$, and $\bu_0(x) \in H^{1}(\bbr^3)$,
with $\nabla \cdot \bu_0=0$ and the $v$-support of $f_0(x, v)$ satisfying
\[
 \text{supp}_{v}f_0(x,\cdot)\subseteq B(R_0) \quad \text{for a.e. $x \in \bbr^3$}.
\]
Then there exists some $T_0>0$, depending only on $R_0$, $\|\bu_0\|_{H^1}$, $\|f_0\|_{H_{\omega}^1}$, such that the Cauchy problem \eqref{eq-cs-ns}-\eqref{eq-sys-inidata} admits a unique strong solution on $[0,T_0]$, satisfying
\[
\begin{aligned}
  (1) \ &\sup_{0\le t\le T_0}\|f(t)\|_{H_{\omega}^1}\le 2\|f_0\|_{H_{\omega}^1}, \ \text{and $R(t)\le 2R_0$ for $t\in [0, T_0]$};\\
  (2)\ &\sup_{0\le t\le T_0}\|\bu(t)\|_{H^1}+\|\bu_t\|_{L^2(0,T_0;L^2)}+\|\nabla^2\bu\|_{L^2(0,T_0;L^2)}+ \|\nabla^2\bu\|_{L^s(0,T_0;L^q)}\le 2K_0 \|\bu_0\|_{H^1}\\
  &\text{for some constant $K_0>0$.}
\end{aligned}
\]
\end{proposition}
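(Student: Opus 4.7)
The plan is a Picard-type iteration built on Propositions~\ref{prop-kine-cs-wp} and~\ref{prop-s}. Set $\bu^0\equiv 0$ and, given $\bu^n$ with the regularity in statement (2), first construct $f^{n+1}$ by applying Proposition~\ref{prop-kine-cs-wp} to
\begin{equation*}
    f^{n+1}_t + \bu^n \cdot \nabla_x f^{n+1} + \nabla_v \cdot \bigl(L[f^{n+1}]f^{n+1} + (\bu^n-v)f^{n+1}\bigr) = 0, \qquad f^{n+1}|_{t=0}=f_0,
\end{equation*}
and then construct $\bu^{n+1}$ by solving the linear divergence-free Stokes problem
\begin{equation*}
    \bu^{n+1}_t - \Delta\bu^{n+1} + \nabla P^{n+1} = -\bu^n\cdot\nabla\bu^{n+1} + \bh^{n+1}, \quad \nabla\cdot\bu^{n+1}=0, \quad \bu^{n+1}|_{t=0}=\bu_0,
\end{equation*}
with $\bh^{n+1}:=\int_{\bbr^3}(v-\bu^n)f^{n+1}\,dv$, via Proposition~\ref{prop-s} after moving the convective term to the source side.

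Next, I would prove by induction on $n$ that, on a short interval $[0,T_0]$ with $T_0$ depending only on $R_0$, $\|\bu_0\|_{H^1}$ and $\|f_0\|_{H^1_\omega}$, every iterate satisfies (1) and (2) with the advertised constants $2$ and $2K_0$. The bound on $R^{n+1}$ follows from Proposition~\ref{prop-kine-cs-wp}(1) together with \eqref{eq-estb}--\eqref{eq-cs-estvbd} and the Sobolev embedding $W^{2,q}\hookrightarrow W^{1,\infty}$ (valid since $q>3$), which converts the inductive $L^s_t W^{2,q}$ bound on $\bu^n$ into an $L^1_t L^\infty_x$ bound vanishing with $T_0$. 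The $H^1_\omega$ bound on $f^{n+1}$ then follows from Proposition~\ref{prop-kine-cs-wp}(2). For $\bu^{n+1}$ I would perform a standard $L^2$ and $H^1$ energy estimate (the convective contribution drops out thanks to $\nabla\cdot\bu^n=0$) to control $\sup_t\|\bu^{n+1}(t)\|_{H^1}$, and apply Proposition~\ref{prop-s} to obtain the $L^2_tL^2$ and $L^s_tL^q$ bounds on $\bu^{n+1}_t$ and $\nabla^2\bu^{n+1}$, with $\bh^{n+1}$ easily controlled in both norms through the compact $v$-support of $f^{n+1}$ and the inductive $H^1$ bound on $\bu^n$.

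The main obstacle is absorbing the convective term $\bu^n\cdot\nabla\bu^{n+1}$ back into the maximal regularity estimate. In three dimensions the naive bound $\|\bu^n\cdot\nabla\bu^{n+1}\|_{L^s_t L^q_x}\lesssim \|\bu^n\|_{L^\infty_{t,x}}\|\nabla\bu^{n+1}\|_{L^s_t L^q_x}$ is unavailable, so I would use H\"older in time together with the scaling condition $\frac{2}{s}+\frac{3}{q}>\frac{5}{2}$ and interpolation between $\bu^n\in L^\infty_tH^1$ and $\bu^n\in L^s_t W^{2,q}$ to extract a short-time factor $T_0^\delta$ for some $\delta>0$ multiplying the inductive iteration bound. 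Choosing $T_0$ small then lets the convective contribution be absorbed into the left-hand side of the maximal regularity inequality, closing the induction and fixing the constant $K_0$.

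Finally I would show that $\{(f^n,\bu^n)\}$ is Cauchy in the weaker space $C([0,T_0];L^2_\omega(\bbr^6))\times\bigl(C([0,T_0];L^2(\bbr^3))\cap L^2(0,T_0;H^1(\bbr^3))\bigr)$ by deriving the equations for $\delta f^{n+1}:=f^{n+1}-f^n$ and $\delta\bu^{n+1}:=\bu^{n+1}-\bu^n$, performing energy estimates that exploit the uniform bounds established above, and closing with Gronwall after a possible further shrinking of $T_0$. Weak-$\ast$ limits in the stronger topologies of step two then upgrade the Cauchy limit to a strong solution in the sense of Definition~\ref{def-stro}; continuity in time of $f$ in $H^1_\omega$ and of $\bu$ in $H^1$ is recovered from the recovered equations together with the uniform bounds on the time derivatives, and uniqueness is immediate from the same contraction estimate applied to two candidate strong solutions.
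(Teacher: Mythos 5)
Your overall architecture --- iteration on the linearized system, short-time uniform bounds closed by induction, absorption of the convective term in the maximal regularity estimate via H\"older in time and interpolation (made possible by $\frac2s+\frac3q>\frac52$), contraction in a lower-order norm, and a weak-$\ast$ upgrade --- is the same as the paper's, up to inessential variations (the paper starts the iteration from the heat flow of $\bu_0$ rather than $\bu^0\equiv0$, and keeps $\bu^{n+1}$ rather than $\bu^n$ in the drag term). Your parenthetical that the convective contribution ``drops out'' applies only to the $L^2$ level; when testing against $-\Delta\bu^{n+1}$ or $\bu^{n+1}_t$ it contributes a term of size $\|\bu^n\|_{L^\infty}^2\|\nabla\bu^{n+1}\|_{L^2}^2$, which is harmless but must be kept.

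There is, however, one concrete step that fails as written: the choice of $C([0,T_0];L^2_{\omega})$ as the contraction space for the kinetic part. The equation for $\delta f^{n+1}=f^{n+1}-f^n$ contains the term $\delta\bu^n\cdot\nabla_xf^n$, which loses a derivative on the \emph{previous} iterate. Testing against $\delta f^{n+1}\omega$ forces you to estimate $\|\delta\bu^n\cdot\nabla_xf^n\|_{L^2_{\omega}}\le\|\delta\bu^n\|_{L^\infty_x}\|\nabla_xf^n\|_{L^2_{\omega}}$, but $\|\delta\bu^n\|_{L^\infty_x}$ is not controlled by the $C_tL^2\cap L^2_tH^1$ norms in which you are contracting, and the alternative of placing $\nabla_xf^n$ in an $L^3_xL^2_v$-type space is not available from the uniform $H^1_{\omega}$ bound alone; integrating by parts in $x$ instead moves the derivative onto $\delta f^{n+1}$, which is again out of reach in a lower-order space. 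This is precisely why the paper measures the difference of the distribution functions in $L^{6/5}(\bbr^6)\cap L^1(\bbr^6)$: testing against $|\delta f^{n+1}|^{1/5}\mathrm{sgn}\,\delta f^{n+1}$, H\"older splits the bad term as $\|\delta f^{n+1}\|_{L^{6/5}}^{1/5}\,\|\nabla_xf^n\|_{L^{3/2}_xL^{6/5}_v}\,\|\delta\bu^n\|_{L^6_x}$, where the middle factor is controlled by $\|\nabla_xf^n\|_{L^2_{\omega}}$ thanks to the weight $\omega$ (with $\alpha>\frac32$) and the compact $v$-support, and $\|\delta\bu^n\|_{L^6_x}\le C\|\nabla\delta\bu^n\|_{L^2}$ is exactly what the fluid contraction norm provides. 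To close your Step 3 you must replace the $L^2_{\omega}$ contraction by such a lower-integrability norm (or otherwise upgrade the spatial integrability of $\nabla_xf^n$); the rest of your plan then goes through as in the paper.
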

Next we use results in Sect. \ref{sec-preli} to complete the proof of Proposition \ref{prop-loc-exist}.
\vskip 0.3cm
\noindent \textit{Proof of Proposition \ref{prop-loc-exist}}. We first construct approximate solutions by linearizing the incompressible Navier--Stokes equations. Given $\bu^n(t, x) \in C([0,T];H^{1}(\bbr^3))\cap L^2(0,T;D^{2}(\bbr^3))\cap L^s(0,T;D^{2,q}(\bbr^3))$, $T>0$, with $\nabla \cdot \bu^{n}=0$  and $\bu^n|_{t=0}=\bu_0$, $(f^{n+1},\bu^{n+1},\nabla P^{n+1})$ is determined by
\begin{equation} \label{eq-cs-s-appro}
     \begin{dcases}
         f^{n+1}_t + \bu^n \cdot \nabla_{x} f^{n+1}+ \nabla_{v} \cdot (L[f^{n+1}]f^{n+1}+(\bu^{n}-v)f^{n+1})=0,\\
         \bu^{n+1}_t+\bu^n \cdot \nabla \bu^{n+1}+ \nabla P^{n+1}=\Delta \bu^{n+1}+\int_{\bbr^3}(v-\bu^{n+1})f^{n+1}dv,\\
         \nabla \cdot \bu^{n+1}=0,
     \end{dcases}
\end{equation}
subject to the initial data
\begin{equation} \label{eq-cs-s-approcau}
 f^{n+1}|_{t=0}=f_0,  \quad \bu^{n+1}|_{t=0}=\bu_0,
\end{equation}
with $\bu_0$ satisfying the compatibility condition $\nabla \cdot \bu_0=0$. From Proposition \ref{prop-kine-cs-wp}, we know $f^{n+1}$ is well-defined. Substituting $f^{n+1}$ into $\eqref{eq-cs-s-appro}_2$, $(\bu^{n+1},\nabla P^{n+1})$ can be obtained by solving the linear equations in routine way. Thus $(f^{n+1},\bu^{n+1},\nabla P^{n+1})$ is obtained, and repeating the above procedure gives rise to an approximate solution sequence. In the iteration procedure, $\bu^0$ is set by
\begin{equation} \label{eq-cs-s-approini}
     \begin{dcases}
       \bu^0_t=\Delta\bu^0,\\
       \bu^0|_{t=0}=\bu_0.
     \end{dcases}
\end{equation}
It is easy to see that
\begin{equation}\label{eq-induini}
 \sup_{0\le t\le \infty}\|\bu^0(t)\|_{H^1}+\|\bu^0_t\|_{L^2(0,\infty;L^2)}+\|\nabla^2\bu^0\|_{L^2(0,\infty;L^2)}+ \|\nabla^2\bu^0\|_{L^s(0,\infty;L^q)}\le 2K_0 \|\bu_0\|_{H^1}
\end{equation}
for some constant $K_0>0$, where we have applied Proposition \ref{prop-s} to \eqref{eq-cs-s-approini}. Then the proof of Proposition \ref{prop-loc-exist} is divided into the following three steps.

\vskip 0.3cm
\noindent \textbf{Step 1. Uniform bound on approximate solutions}
\vskip 0.3cm
We prove by induction that there exists $T_* \in (0,T]$, to be determined later, such that
\begin{equation}\label{eq-indu-assup}
 \sup_{0\le t\le T_*}\|\bu^n(t)\|_{H^1}+\|\bu^n_t\|_{L^2(0,T_*;L^2)}+\|\nabla^2\bu^n\|_{L^2(0,T_*;L^2)}+ \|\nabla^2\bu^n\|_{L^s(0,T_*;L^q)}\le 2K_0 \|\bu_0\|_{H^1}
\end{equation}
for some constant $K_0>0$. Under the above induction hypothesis, it suffices to prove that \eqref{eq-indu-assup} holds for $n+1$. Denote by $R^{n+1}(t)$ the bound of $v$-support of $f^{n+1}(t,x,v)$. We deduce from \eqref{eq-cs-estvbd} and Proposition \ref{prop-s}(ii) that
\begin{equation}\label{eq-appro-f-norm}
  \begin{aligned}
    &\sup_{0\le t \le T_1}R^{n+1}(t)\le \left(R_0+\int_0^{T_1} \|\bu^n(t)\|_{L^{\infty}}dt\right)(1+T_1),\\
    &\sup_{0\le t \le T_1}\|f^{n+1}(t)\|_{H^1_{\omega}}\le \|f_0\|_{H_{\omega}^1}\exp \left( C \int_0^{T_1} \Big(1+R^{n+1}(t)+\|\bu^n(t)\|_{W^{1,\infty}} \Big) dt \right).
  \end{aligned}
\end{equation}
Take $T_1:=T_1(R_0, \|\bu_0\|_{H^1})$ suitably small such that
\begin{equation}\label{eq-appro-f-normtwo}
  \begin{aligned}
    &\sup_{0\le t \le T_1}R^{n+1}(t)\le 2R_0,\\
    &\sup_{0\le t \le T_1}\|f^{n+1}(t)\|_{H^1_{\omega}}\le 2\|f_0\|_{H_{\omega}^1}.
  \end{aligned}
\end{equation}
We take the inner product of $\eqref{eq-cs-s-appro}_2$ with $2\bu^{n+1}$  to obtain
\begin{equation}\label{eq-appro-usqua}
  \begin{aligned}
    \frac{d}{dt}\|\bu^{n+1}\|_{L^2}^2 +2\|\nabla \bu^{n+1}\|_{L^2}^2
    =&2\int_{\bbr^3}\int_{\bbr^3}(v-\bu^{n+1}) \cdot \bu^{n+1}f^{n+1}dv dx\\
    \le&2\int_{\bbr^3}\int_{\bbr^3}v \cdot \bu^{n+1}f^{n+1}dv dx.
  \end{aligned}
\end{equation}
Taking the inner product of $\eqref{eq-cs-s-appro}_2$ with $-\Delta\bu^{n+1}$,  we have
\begin{equation}\label{eq-appro-dif-usqua}
  \begin{aligned}
    &\frac{d}{dt}\|\nabla\bu^{n+1}\|_{L^2}^2 +\|\Delta \bu^{n+1}\|_{L^2}^2\\
    =&\int_{\bbr^3}\bu^n\cdot\nabla\bu^{n+1}\cdot\Delta \bu^{n+1} dx-  \int_{\bbr^3}\int_{\bbr^3}(v-\bu^{n+1}) \cdot \Delta\bu^{n+1}f^{n+1}dv dx\\
    \le&C\|\bu^n\|_{L^{\infty}}^2 \|\nabla\bu^{n+1}\|_{L^2}^2+  C\left\|\int_{\bbr^3} (v-\bu^{n+1})f^{n+1} dv\right\|_{L^2}^2+ \frac14\|\Delta \bu^{n+1}\|_{L^2}^2.
  \end{aligned}
\end{equation}
Taking the dot product of $\eqref{eq-cs-s-appro}_2$ with $2\bu^{n+1}_t$ and integrating the resulting equation over $\bbr^3$, we deduce that
\begin{equation}\label{eq-appro-dif-gradusqua}
  \begin{aligned}
    &\frac{d}{dt}\|\nabla \bu^{n+1}\|_{L^2}^2 +2\|\bu^{n+1}_t\|_{L^2}^2\\
    =&-2\int_{\bbr^3}\bu^n \cdot \nabla \bu^{n+1} \cdot \bu^{n+1}_t dx + 2\int_{\bbr^3}\int_{\bbr^3}(v-\bu^{n+1}) \cdot \bu^{n+1}_t f^{n+1} dv dx\\
    \le& 2\|\bu^{n}\|_{L^{\infty}} \|\nabla \bu^{n+1}\|_{L^2} \|\bu^{n+1}_t\|_{L^2}+ 2\left\|\int_{\bbr^3} (v-\bu^{n+1})f^{n+1} dv\right\|_{L^2} \|\bu^{n+1}_t\|_{L^2}\\
    \le&C\|\bu^{n}\|_{L^{\infty}}^2\|\nabla \bu^{n+1}\|_{L^2}^2+ C\left\|\int_{\bbr^3} (v-\bu^{n+1})f^{n+1} dv\right\|_{L^2}^2+ \|\bu_t^{n+1}\|_{L^2}^2,
  \end{aligned}
\end{equation}
Summing \eqref{eq-appro-usqua}, \eqref{eq-appro-dif-usqua} and \eqref{eq-appro-dif-gradusqua} together yields
\begin{equation}\label{eq-appro-dif-uhonesqua}
  \begin{aligned}
    &\frac{d}{dt}\|\bu^{n+1}\|_{H^1}^2 +2\|\nabla\bu^{n+1}\|_{L^2}^2+\frac34\|\Delta \bu^{n+1}\|_{L^2}^2 +\|\bu_t^{n+1}\|_{L^2}^2\\
    \le&C\|\bu^{n}\|_{L^{\infty}}^2\|\nabla \bu^{n+1}\|_{L^2}^2+ C\left\|\int_{\bbr^3} (v-\bu^{n+1})f^{n+1} dv\right\|_{L^2}^2\\
     &+ 2\int_{\bbr^3}\int_{\bbr^3}v \cdot \bu^{n+1}f^{n+1}dv dx.
  \end{aligned}
\end{equation}
Take $T_2\le T_1$. It holds for $t\in [0, T_2]$ that
\begin{equation}\label{eq-coultwo}
  \begin{aligned}
    &C\left\|\int_{\bbr^3} (v-\bu^{n+1})f^{n+1} dv\right\|_{L^2}^2\\
    \le&C(R_0)\|f^{n+1}\|_{L_{\omega}^2}^2+ C(R_0)\|f^{n+1}\|_{L_{\omega}^2}^2\|\nabla \bu^{n+1}\|_{L^2}\|\nabla^2 \bu^{n+1}\|_{L^2}\\
    \le&C(R_0)\|f^{n+1}\|_{L_{\omega}^2}^2+C(R_0)\|f^{n+1}\|_{L_{\omega}^2}^4\|\nabla \bu^{n+1}\|_{L^2}^2 +\frac14\|\Delta \bu^{n+1}\|_{L^2}^2,
  \end{aligned}
\end{equation}
and
\begin{equation}\label{eq-couint}
  \begin{aligned}
    2\int_{\bbr^3}\int_{\bbr^3}v \cdot \bu^{n+1}f^{n+1}dv dx \le& C(R_0)\|f^{n+1}\|_{L_{\omega}^2}\|\bu^{n+1}\|_{L^2}\\
     \le&C(R_0)\|f^{n+1}\|_{L_{\omega}^2}^2+ \|\bu^{n+1}\|_{L^2}^2,
  \end{aligned}
\end{equation}
where we have used the Sobolev inequality
\[
 \|\bu^{n+1}\|_{L^{\infty}}\le C \|\nabla \bu^{n+1}\|_{L^2}^{\frac12}\|\nabla^2 \bu^{n+1}\|_{L^2}^{\frac12}
\]
and the elliptic estimate
\[
 \|\nabla^2 \bu^{n+1}\|_{L^2}\le C\|\Delta \bu^{n+1}\|_{L^2}.
\]
Substituting \eqref{eq-coultwo} and \eqref{eq-couint} into \eqref{eq-appro-dif-uhonesqua} leads to
\begin{equation}\label{eq-appro-gron-uhonesqua}
  \begin{aligned}
    &\frac{d}{dt}\|\bu^{n+1}\|_{H^1}^2 +\frac12\|\Delta \bu^{n+1}\|_{L^2}^2 +\|\bu_t^{n+1}\|_{L^2}^2\\
    \le&\Big(C+C\|\bu^{n}\|_{L^{\infty}}^2+C(R_0)\|f^{n+1}\|_{L_{\omega}^2}^4\Big) \|\bu^{n+1}\|_{H^1}^2+ C(R_0)\|f^{n+1}\|_{L_{\omega}^2}^2
  \end{aligned}
\end{equation}
for $t\in [0, T_2]$. Solving the above Gronwall's inequality gives
\begin{equation}\label{eq-appro-uhonesqua}
  \begin{aligned}
    &\sup_{0\le t\le T_2}\|\bu^{n+1}(t)\|_{H^1}^2 +\frac12\int_0^{T_2}\|\Delta \bu^{n+1}\|_{L^2}^2 dt + \int_0^{T_2}\|\bu_t^{n+1}\|_{L^2}^2dt\\
    \le&\left(\|\bu_0\|_{H^1}^2+\int_0^{T_2}C(R_0)\|f^{n+1}\|_{L_{\omega}^2}^2dt\right)\exp\left(\int_0^{T_2}\Big(C+C\|\bu^{n}\|_{L^{\infty}}^2
    +C(R_0)\|f^{n+1}\|_{L_{\omega}^2}^4\Big)dt\right)
  \end{aligned}
\end{equation}
Take $T_2:=T_2(R_0, \|\bu_0\|_{H^1}, \|f_0\|_{H_{\omega}^1})$ suitably small such that
\begin{equation}\label{eq-u-nabtu-ut}
 \sup_{0\le t\le T_2}\|\bu^{n+1}(t)\|_{H^1}+\|\bu^{n+1}_t\|_{L^2(0,T_2;L^2)}+\|\nabla^2\bu^{n+1}\|_{L^2(0,T_2;L^2)}\le K_0 \|\bu_0\|_{H^1}.
\end{equation}
Take $T_3\le T_2$. Applying Proposition \ref{prop-s} to $\eqref{eq-cs-s-appro}_2-\eqref{eq-cs-s-appro}_3$ results in
\begin{equation}\label{eq-appro-ulsq}
  \begin{aligned}
    \|\nabla^2\bu^{n+1}\|_{L^s(0,T_3;L^q)}
    \le&C\Bigg(\|\bu_0\|_{H^1}+\|\bu^n\cdot\nabla\bu^{n+1}\|_{L^s(0,T_3;L^q)}\\
     &\quad+\left\|\int_{\bbr^3} (v-\bu^{n+1})f^{n+1} dv\right\|_{L^s(0,T_3;L^q)}\Bigg).
  \end{aligned}
\end{equation}
The last two terms in the right-hand side of \eqref{eq-appro-ulsq} are estimated as follows.
\begin{equation}\label{eq-appro-ulsq-p1}
  \begin{aligned}
    &C\|\bu^n\cdot\nabla\bu^{n+1}\|_{L^s(0,T_3;L^q)}\\
    \le&C\left(\int_0^{T_3}\|\bu^{n}\|_{L^{\infty}}^s\|\nabla\bu^{n+1}\|_{L^2}^{s(1-\theta)}\|\nabla^2\bu^{n+1}\|_{L^q}^{s\theta} dt\right)^{\frac1s}\\
     \le&C\left(\int_0^{T_3}\|\bu^{n}\|_{L^{\infty}}^{\frac{s}{1-\theta}}\|\nabla\bu^{n+1}\|_{L^2}^{s} dt\right)^{\frac1s}+\frac12 \|\nabla^2\bu^{n+1}\|_{L^s(0,T_3;L^q)}\\
     \le&C \sup_{0\le t\le T_3}\|\nabla\bu^{n+1}(t)\|_{L^2} \left(\int_0^{T_3}\|\bu^{n}\|_{L^{\infty}}^{\frac{s}{1-\theta}} dt\right)^{\frac1s}+\frac12 \|\nabla^2\bu^{n+1}\|_{L^s(0,T_3;L^q)},
  \end{aligned}
\end{equation}
where we have used the following Gagliardo--Nirenberg inequality in $\bbr^3$,
\begin{equation}\label{eq-gn-ineq}
  \|\nabla\bu^{n+1}\|_{L^q}\le C\|\nabla\bu^{n+1}\|_{L^2}^{1-\theta}\|\nabla^2\bu^{n+1}\|_{L^q}^{\theta} \quad \text{with $1-\frac3q=-\frac{1-\theta}{2}+\theta\left(2-\frac3q\right)$}.
\end{equation}
Using \eqref{eq-appro-f-normtwo} and the Sobolev inequality, we proceed to estimate the last term in the right-hand side of \eqref{eq-appro-ulsq}.
\begin{equation}\label{eq-appro-ulsq-p2}
  \begin{aligned}
   &C\left\|\int_{\bbr^3} (v-\bu^{n+1})f^{n+1} dv\right\|_{L^s(0,T_3;L^q)}\\
   \le& C(R_0)\sup_{0\le t\le T_3}\|f^{n+1}(t)\|_{H_{\omega}^1}T_3^{\frac1s}+ C(R_0)\sup_{0\le t\le T_3}\|f^{n+1}(t)\|_{H_{\omega}^1}
  \left(\int_0^{T_3}\|\bu^{n+1}\|_{L^{\infty}}^{s} dt\right)^{\frac1s}.
  \end{aligned}
\end{equation}
Substituting \eqref{eq-appro-ulsq-p1} and \eqref{eq-appro-ulsq-p2} into \eqref{eq-appro-ulsq} gives
\begin{equation}\label{eq-appro-ulsq-subst}
  \begin{aligned}
    &\|\nabla^2\bu^{n+1}\|_{L^s(0,T_3;L^q)}\\
    \le&C\|\bu_0\|_{H^1}+C \sup_{0\le t\le T_3}\|\nabla\bu^{n+1}(t)\|_{L^2} \left(\int_0^{T_3}\|\bu^{n}\|_{L^{\infty}}^{\frac{s}{1-\theta}} dt\right)^{\frac1s}
    \\&+ C(R_0)\sup_{0\le t\le T_3}\|f^{n+1}(t)\|_{H_{\omega}^1}T_3^{\frac1s}+ C(R_0)\sup_{0\le t\le T_3}\|f^{n+1}(t)\|_{H_{\omega}^1}
  \left(\int_0^{T_3}\|\bu^{n+1}\|_{L^{\infty}}^{s} dt\right)^{\frac1s}.
  \end{aligned}
\end{equation}
It is easy to see from \eqref{eq-sq-rela} and \eqref{eq-gn-ineq} that
\[
 \frac{s}{1-\theta}=\frac{(5q-6)s}{2q}<2 \quad \text{and} \quad 1<s<2.
\]
Using \eqref{eq-indu-assup}, \eqref{eq-appro-f-normtwo} and \eqref{eq-u-nabtu-ut}, we take $T_3:=T_3(R_0, \|\bu_0\|_{H^1}, \|f_0\|_{H_{\omega}^1})$ suitably small such that
\begin{equation}\label{eq-appro-ulsqk}
  \|\nabla^2\bu^{n+1}\|_{L^s(0,T_3;L^q)}\le K_0\|\bu_0\|_{H^1}.
\end{equation}
Define $T_*:=\min\{T_1, T_2, T_3\}$. Adding \eqref{eq-appro-ulsqk} to \eqref{eq-u-nabtu-ut} results in
\begin{equation}\label{eq-indu-assupnex}
 \sup_{0\le t\le T_*}\|\bu^{n+1}(t)\|_{H^1}+\|\bu^{n+1}_t\|_{L^2(0,T_*;L^2)}+\|\nabla^2\bu^{n+1}\|_{L^2(0,T_*;L^2)}+ \|\nabla^2\bu^{n+1}\|_{L^s(0,T_*;L^q)}\le 2K_0 \|\bu_0\|_{H^1}.
\end{equation}
From \eqref{eq-induini}, we know \eqref{eq-indu-assup} holds for $n=0$. Thus, we prove by induction that \eqref{eq-indu-assup} holds for all $n \in \bbn$.
\vskip 3mm
\noindent \textbf{Step 2. Convergence of approximate aolutions}
\vskip 3mm
Define
\[
 \overline{f}^{n+1}:=f^{n+1}-f^n,\quad \overline{\bu}^{n+1}:=\bu^{n+1}-\bu^n,\quad \overline{P}^{n+1}:=P^{n+1}-P^n.
\]
It follows from \eqref{eq-cs-s-appro}-\eqref{eq-cs-s-approcau} that
\begin{equation} \label{eq-cs-s-appro-dif}
     \begin{dcases}
      \overline{f}^{n+1}_t +\bu^n \cdot\nabla_{x} \overline{f}^{n+1} +\nabla_{v}\cdot\Big[L[f^{n+1}]\overline{f}^{n+1}+(\bu^n-v)\overline{f}^{n+1}\Big]\\
           \qquad+\overline{\bu}^{n}\cdot\nabla_{x}f^n+\nabla_{v}\cdot\Big[L[\overline{f}^{n+1}]f^n +f^n \overline{\bu}^n\Big]=0,\\
       \overline{\bu}^{n+1}_t +\bu^n\cdot \nabla \overline{\bu}^{n+1} +\overline{\bu}^n \cdot \nabla \bu^n+\nabla\overline{P}^{n+1}\\
          \qquad=\Delta \overline{\bu}^{n+1}  -\int_{\bbr^2}f^{n+1}\overline{\bu}^{n+1}dv +\int_{\bbr^2}(v-\bu^{n})\overline{f}^{n+1} dv,\\
       \nabla\cdot\overline{\bu}^{n+1}=0,
     \end{dcases}
\end{equation}
with
\begin{equation} \label{eq-cs-s-approini-dif}
 \overline{f}^{n+1}|_{t=0}=0,  \quad \overline{\bu}^{n+1}|_{t=0}=0.
\end{equation}
Taking the inner product of $\eqref{eq-cs-s-appro-dif}_2$ with $\overline{\bu}^{n+1}$, we deduce that for $t\in [0, T_*]$,
\begin{equation}\label{eq-app-udif-squa}
  \begin{aligned}
    &\frac12\frac{d}{dt}\|\overline{\bu}^{n+1}\|_{L^2}^2 +\|\nabla\overline{\bu}^{n+1}\|_{L^2}^2\\
    \le& -\int_{\bbr^3} \overline{\bu}^n \cdot \nabla \bu^n\cdot \overline{\bu}^{n+1}dx+ \int_{\bbr^3}\int_{\bbr^3}\overline{f}^{n+1}(v-\bu^{n})dv\cdot\overline{\bu}^{n+1}dx\\
   \le&C\|\nabla\overline\bu^{n}\|_{L^2}\|\nabla\bu^{n}\|_{L^3}
   \|\overline{\bu}^{n+1}\|_{L^2}+C\left\|\int_{\bbr^3}\overline{f}^{n+1}v dv\right\|_{L^{\frac65}} \|\nabla\overline{\bu}^{n+1}\|_{L^2}\\ &+C\left\|\int_{\bbr^3}\overline{f}^{n+1} dv\right\|_{L^{\frac65}}\|\bu^{n}\|_{L^{\infty}}\|\nabla\overline{\bu}^{n+1}\|_{L^2}\\
   \le&C\|\nabla\bu^{n}\|_{L^3}^2\|\overline{\bu}^{n+1}\|_{L^2}^2 +C(R_0)\Big(1+\|\bu^{n}\|_{L^{\infty}}^2\Big)\left\|\overline{f}^{n+1}\right\|_{L^{\frac65}}^2\\   &+\frac12 \|\nabla\overline{\bu}^{n+1}\|_{L^2}^2 +\frac{1}{24} \|\nabla\overline{\bu}^{n}\|_{L^2}^2,
  \end{aligned}
\end{equation}
that is,
\begin{equation}\label{eq-app-udif-squashort}
 \begin{gathered}
  \frac{d}{dt}\|\overline{\bu}^{n+1}\|_{L^2}^2 +\|\nabla\overline{\bu}^{n+1}\|_{L^2}^2
   \le C\|\nabla\bu^{n}\|_{L^3}^2\|\overline{\bu}^{n+1}\|_{L^2}^2 \\ +C(R_0)\Big(1+\|\bu^{n}\|_{L^{\infty}}^2\Big)\left\|\overline{f}^{n+1}\right\|_{L^{\frac65}}^2 +\frac{1}{12} \|\nabla\overline{\bu}^{n}\|_{L^2}^2.
   \end{gathered}
\end{equation}
Multiplying $\eqref{eq-cs-s-appro-dif}_1$ by $\frac65 \left|\overline{f}^{n+1}\right|^{\frac15}\text{sgn}\overline{f}^{n+1}$ leads to
\begin{equation}\label{eq-app-kcswt-dif-frac}
  \begin{aligned}
    &\frac{\partial}{\partial t}\left|\overline{f}^{n+1}\right|^{\frac65} +\bu^n \cdot\nabla_{x} \left|\overline{f}^{n+1}\right|^{\frac65} +\nabla_{v}\cdot\bigg[L[f^{n+1}]\left|\overline{f}^{n+1}\right|^{\frac65} +(\bu^n-v)\left|\overline{f}^{n+1}\right|^{\frac65}\bigg]\\
    =&-\frac15 \nabla_v\cdot L[f^{n+1}]\left|\overline{f}^{n+1}\right|^{\frac65} +\frac35\left|\overline{f}^{n+1}\right|^{\frac65}\\
    &-\frac65 \left|\overline{f}^{n+1}\right|^{\frac15}\text{sgn}\overline{f}^{n+1}\overline{\bu}^{n}\cdot\nabla_x f^n\\
    &-\frac65 \left|\overline{f}^{n+1}\right|^{\frac15}\text{sgn}\overline{f}^{n+1}\nabla_{v} \cdot \bigg(L[\overline{f}^{n+1}]f^n +\overline{\bu}^n f^n\bigg).
  \end{aligned}
\end{equation}
Integrating \eqref{eq-app-kcswt-dif-frac} over $\bbr^3\times \bbr^3$ gives
\begin{equation}\label{eq-app-kcswt-dif-frac-gron}
  \begin{aligned}
    &\frac{d}{d t}\left\|\overline{f}^{n+1}\right\|_{L^{\frac65}}^{\frac65}\\
    =&\int_{\bbr^3}\int_{\bbr^3}\Bigg(-\frac15 \nabla_v\cdot L[f^{n+1}]\left|\overline{f}^{n+1}\right|^{\frac65} +\frac35\left|\overline{f}^{n+1}\right|^{\frac65}\Bigg)dx dv \\
    &-\int_{\bbr^3}\int_{\bbr^3}\frac65 \left|\overline{f}^{n+1}\right|^{\frac15}\text{sgn}\overline{f}^{n+1}\overline{\bu}^{n}\cdot\nabla_x f^ndx dv\\
    &-\int_{\bbr^3}\int_{\bbr^3} \frac65 \left|\overline{f}^{n+1}\right|^{\frac15}\text{sgn}\overline{f}^{n+1}\nabla_{v} \cdot \bigg(L[\overline{f}^{n+1}]f^n +\overline{\bu}^n f^n\bigg) dx dv\\
    =:&\sum_{i=1}^3N_i.
  \end{aligned}
\end{equation}
For $t\in [0, T_*]$, we estimate each $N_i$ $(i=1,2,3)$ as follows.
\begin{equation*}
  \begin{aligned}
     N_1:=&\int_{\bbr^3}\int_{\bbr^3}\Bigg(-\frac15 \nabla_v\cdot L[f^{n+1}]\left|\overline{f}^{n+1}\right|^{\frac65} +\frac35\left|\overline{f}^{n+1}\right|^{\frac65}\Bigg)dx dv\\
     \le& C \left\|\overline{f}^{n+1}\right\|_{L^{\frac65}}^{\frac65};\\
     N_2:=& -\int_{\bbr^3}\int_{\bbr^3}\frac65 \left|\overline{f}^{n+1}\right|^{\frac15}\text{sgn}\overline{f}^{n+1}\overline{\bu}^{n}\cdot\nabla_x f^ndx dv\\
         \le& C\left\|\overline{f}^{n+1}\right\|_{L^{\frac65}}^{\frac15} \left\|\nabla_x f^{n}\right\|_{L_x^{\frac32}L_v^{\frac65}} \|\nabla\overline{\bu}^n\|_{L^2}\\
         \le& C(R_0)\|\nabla_x f^{n}\|_{L_{\omega}^2} \left\|\overline{f}^{n+1}\right\|_{L^{\frac65}}^{\frac15} \|\nabla\overline{\bu}^n\|_{L^2};
   \end{aligned}
\end{equation*}
\begin{equation*}
   \begin{aligned}
     N_3:=& -\int_{\bbr^3}\int_{\bbr^3} \frac65 \left|\overline{f}^{n+1}\right|^{\frac15}\text{sgn}\overline{f}^{n+1}\nabla_{v} \cdot \bigg(L[\overline{f}^{n+1}]f^n +\overline{\bu}^n f^n\bigg) dx dv\\
     \le& C\left\|\overline{f}^{n+1}\right\|_{L^{\frac65}}^{\frac15} \left\|\overline{f}^{n+1}\right\|_{L^{1}} \left\|{f}^{n}\right\|_{L^{\frac65}}\\
     &+C(R_0)\left\|\overline{f}^{n+1}\right\|_{L^{\frac65}}^{\frac15} \left\|\overline{f}^{n+1}\right\|_{L^{1}} \left\|\nabla_v{f}^{n}\right\|_{L^{\frac65}}\\
     &+ C\left\|\overline{f}^{n+1}\right\|_{L^{\frac65}}^{\frac15} \left\|\nabla_v f^{n}\right\|_{L_x^{\frac32}L_v^{\frac65}} \|\nabla\overline{\bu}^n\|_{L^2}\\
     \le& C(R_0)\|f^{n}\|_{H_{\omega}^1} \left\|\overline{f}^{n+1}\right\|_{L^{\frac65}}^{\frac15} \left\|\overline{f}^{n+1}\right\|_{L^{1}}+ C(R_0)\|\nabla_v f^{n}\|_{L_{\omega}^2} \left\|\overline{f}^{n+1}\right\|_{L^{\frac65}}^{\frac15} \|\nabla\overline{\bu}^n\|_{L^2}.
  \end{aligned}
\end{equation*}
In the estimate of $N_2$ and $N_3$, we have used the following inequalities.
\begin{equation*}
  \begin{aligned}
     \left\|f^n\right\|_{L^{\frac65}}\le& C(R_0)\Bigg(\int_{\bbr^3}\int_{\bbr^3}|f^n|^2(1+|x|^2)^{\frac{2\alpha}{3}} dx dv \Bigg)^{\frac12}\|(1+|x|^2)^{-\frac{\alpha}{3}}\|_{L^3}\\
     \le& C(R_0) \|f^n\|_{L_{\omega}^2};
  \end{aligned}
\end{equation*}
\begin{equation*}
  \begin{aligned}
     \left\|\nabla_x f^{n}\right\|_{L_x^{\frac32}L_v^{\frac65}}
     \le& C(R_0)\Bigg(\int_{\bbr^3}\int_{\bbr^3}|\nabla_{x}f^n|^2(1+|x|^2)^{\frac{\alpha}{3}} dx dv \Bigg)^{\frac12} \|(1+|x|^2)^{-\frac{\alpha}{6}}\|_{L^6}\\
     \le& C(R_0) \|\nabla_{x}f^n\|_{L_{\omega}^2}.
   \end{aligned}
\end{equation*}
$\displaystyle\left\|\nabla_v{f}^{n}\right\|_{L^{\frac65}}$ and $\displaystyle\left\|\nabla_v f^{n}\right\|_{L_x^{\frac32}L_v^{\frac65}}$ are estimated similarly as above.
Substituting the estimates on $N_i$ $(i=1,2,3)$ into \eqref{eq-app-kcswt-dif-frac-gron}, we deduce that for $t\in [0, T_*]$,
\begin{equation}\label{eq-app-kcswt-dif-frac-gronshort}
  \begin{aligned}
    \frac{d}{d t}\left\|\overline{f}^{n+1}\right\|_{L^{\frac65}}^{2}
    \le&C(R_0)\bigg(1 + \|f^n\|_{H_{\omega}^1}^2 \bigg)\left\|\overline{f}^{n+1}\right\|_{L^{\frac65}}^{2}
    +\left\|\overline{f}^{n+1}\right\|_{L^1}^2 +\frac{1}{12} \|\nabla\overline{\bu}^n\|_{L^2}^2.
  \end{aligned}
\end{equation}
Similarly, we have for $t\in [0, T_*]$,
\begin{equation}\label{eq-app-kcswt-dif-lonesqu-gron}
  \begin{aligned}
    \frac{d}{d t}\left\|\overline{f}^{n+1}\right\|_{L^{1}}^{2}
    \le&C(R_0)\bigg(1 + \|f^n\|_{H_{\omega}^1}^2 \bigg)\left\|\overline{f}^{n+1}\right\|_{L^{1}}^{2}
     +\frac{1}{12} \|\nabla\overline{\bu}^n\|_{L^2}^2.
  \end{aligned}
\end{equation}
Define
\[
 F^{n+1}(t):=\|\overline{\bu}^{n+1}\|_{L^2}^2 +\left\|\overline{f}^{n+1}\right\|_{L^{\frac65}}^2 +\left\|\overline{f}^{n+1}\right\|_{L^1}^2.
\]
Combining \eqref{eq-app-udif-squashort}, \eqref{eq-app-kcswt-dif-frac-gronshort}, and \eqref{eq-app-kcswt-dif-lonesqu-gron} , we deduce that for $t\in [0, T_*]$,
\begin{equation}\label{eq-app-dif-gron}
  \begin{aligned}
    &\frac{d}{dt}F^{n+1} +\|\nabla\overline{\bu}^{n+1}\|_{L^2}^2\\
    \le& C(R_0)\bigg(1 +\|\bu^n\|_{L^{\infty}}^2 +\|f^n\|_{H_{\omega}^1}^2 +\|\nabla{\bu}^{n}\|_{L^3}^2 \bigg) F^{n+1}
     +\frac14 \|\nabla\overline{\bu}^n\|_{L^2}^2.
  \end{aligned}
\end{equation}
Solving the above Gronwall inequality in $[0, T_0]$ for $0<T_0\le T_*$, we obtain
\begin{equation}\label{eq-app-dif-est}
 \begin{aligned}
  \sup_{0\le t\le T_0}F^{n+1}(t) +\int_0^{T_0}\|\nabla\overline{\bu}^{n+1}(t)\|_{L^2}^2 dt
  \le \frac{A(T_0)}{4} \int_0^{T_0}\|\nabla\overline{\bu}^n(t)\|_{L^2}^2 dt,
  \end{aligned}
\end{equation}
where $A(T_0)$ is given by
\[
 A(T_0):=\exp\left(\int_0^{T_0}  C(R_0)\bigg(1 +\|\bu^n\|_{L^{\infty}}^2 +\|f^n\|_{H_{\omega}^1}^2 +\|\nabla{\bu}^{n}\|_{L^3}^2 \bigg)  dt \right).
\]
Using the uniform bound on approximate solutions, we take $T_0:=T_0(R_0, \|\bu_0\|_{H^1}, \|f_0\|_{H_{\omega}^1})$ suitably small, so that
\[
 A(T_0) \le 2.
\]
Thus, we have
\begin{equation}\label{eq-app-dif-est-tzro}
  \sup_{0\le t\le T_0}F^{n+1}(t) +\int_0^{T_0}\|\nabla\overline{\bu}^{n+1}(t)\|_{L^2}^2 dt
  \le \frac{1}{2} \int_0^{T_0}\|\nabla\overline{\bu}^n(t)\|_{L^2}^2 dt.
\end{equation}
Summing \eqref{eq-app-dif-est-tzro} over all $n\in \bbn$ yields
\begin{equation}\label{eq-app-dif-est-sum}
  \sum_{n=2}^{\infty}\sup_{0\le t\le T_0}F^n(t) +\frac{1}{2}\sum_{n=2}^{\infty} \int_0^{T_0}\|\nabla\overline{\bu}^n(t)\|_{L^2}^2 dt
  \le \frac{1}{2}\int_0^{T_0}\|\nabla\overline{\bu}^1(t)\|_{L^2}^2 dt.
\end{equation}
We deduce from \eqref{eq-app-dif-est-sum} that there exists $(f, \bu)$ such that
\begin{equation}\label{eq-app-conver-lowoder}
  \begin{aligned}
    &f^n \to f, \quad\text{in $C([0, T_0]; L^1)$, as $n \to \infty$};\\
    &\bu^n \to \bu, \quad \text{in $C(0, T_0; L^2)$, as $n \to \infty$};\\
    &\bu^n \to \bu, \quad \text{in $L^2(0, T_0; D^1)$, as $n \to \infty$}.
  \end{aligned}
\end{equation}
From \eqref{eq-app-conver-lowoder}, it is easy to show that $(f, \bu)$ verifies \eqref{eq-cs-ns} in the sense of distributions.
\vskip 3mm
\noindent \textbf{Step 3. Continuity in strong solution space}
\vskip 3mm
Combining the uniform bound estimates \eqref{eq-indu-assup} and \eqref{eq-appro-f-normtwo} with \eqref{eq-app-conver-lowoder}, we deduce that
\begin{equation}\label{eq-app-conver-wek}
  \begin{aligned}
    f^n &\rightharpoonup f, \quad\text{weakly-$\star$ in $L^{\infty}(0, T_0; H^1_{\omega})$, as $n \to \infty$};\\
    \bu^n &\rightharpoonup  \bu, \quad \text{weakly in $L^{2}(0, T_0; H^2)$, as $n \to \infty$};\\
    \bu^n_t &\rightharpoonup \bu_t, \quad \text{weakly in $L^2(0, T_0; L^2)$, as $n \to \infty$};\\
    \nabla^2\bu^n &\rightharpoonup \nabla^2\bu, \quad \text{weakly in $L^s(0, T_0; L^{q})$, as $n \to \infty$}.
  \end{aligned}
\end{equation}
It follows from \eqref{eq-app-conver-wek} that $\bu_t \in L^2(0, T_0; L^2)$ and $\bu \in L^2(0, T_0; H^{2})$.
Using the regularity of $\bu$, we can show that $\bu\in C(0, T_0; H^1)$. Thus,
\[
 \bu \in C([0, T_0]; H^1)\cap L^2(0, T_0; D^{2})\cap L^s(0, T_0; D^{2,q}).
\]
Employing Proposition \ref{prop-kine-cs-wp}, we infer that
\begin{equation}\label{eq-conti-fwtsobo}
 f \in C([0, T_0]; H^1_{\omega}).
\end{equation}
Therefore, $(f,\bu)$ is the desired strong solution in the sense of Definition \ref{def-stro}. The uniqueness of strong solutions can be proved in the same way as in the derivation of \eqref{eq-app-dif-gron}.
This completes the proof. $\hfill \square$
%
%
\section{A Priori Estimates}\label{sec-apriori}
\setcounter{equation}{0}
In this section, we derive some a priori estimates on classical solutions to the coupled model, which are used to extend the local strong solutions.
\begin{lemma}\label{lm-cs-s-emapriori}
Let $0<R_0<\infty$. Assume that $f_0(x,v)\ge 0$, $f_0(x,v) \in H_{\omega}^1(\bbr^3 \times \bbr^3)\cap L^{\infty}(\bbr^3 \times \bbr^3)$, and $\bu_0(x) \in H^{1}(\bbr^3)$,
with $\nabla \cdot \bu_0=0$ and the $v$-support of $f_0(x,v)$ satisfying
\[
 \text{supp}_{v}f_0(x,\cdot)\subseteq B(R_0) \quad \text{for a.e. $x \in \bbr^3$}.
\]
If $(f, \bu)$ is a classical solution in $[0, T]$, then it holds that
\[
 \begin{aligned}
   (1)&\ f\ge0 \ \text{and} \ \sup_{0\le t\le T}\|f(t)\|_{L^{\infty}}\le \|f_0\|_{L^{\infty}}\exp\big(CT\big),\quad \text{where $C:=C\Big(\|\varphi\|_{C^0}, \|f_0\|_{L^1}\Big)$};\\
   (2)&\ \sup_{0\le t\le T}\|\rho(t)\|_{L^{p}}=\|\rho_0\|_{L^p}\le C\|f_0\|_{L^{\infty}}^{1-\frac1p}R_0^{3-\frac3p},\quad  1\le p\le\infty, \\
    &\ \text{for some constant $C>0$};\\
   (3)&\ E(T)+\frac12\int_0^T \int_{\bbr^6}\int_{\bbr^6}\varphi(|x-y|)f(t,y,v^*)f(t,x,v)|v^*-v|^2dy dv^* dx dv dt\\
   & \qquad + \int_0^T \|\nabla \bu(t) \|_{L^2}^2 d t +\int_0^T \int_{\bbr^6} f(t,x,v)|\bu-v|^2 dx dv dt=E_0.
 \end{aligned}
\]
\end{lemma}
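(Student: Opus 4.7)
The proposal is to treat the three assertions separately, in order, since each builds on standard techniques already present in the paper.

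\textbf{Non-negativity and $L^\infty$ bound of $f$.} I would first rewrite the kinetic equation in non-conservative form. Using $\nabla_v\cdot L[f] = -3\int\varphi(|x-y|)f(t,y,v^*)dy\,dv^*$ and $\nabla_v\cdot(u-v) = -3$, the equation becomes
\[
 f_t + \bu\cdot\nabla_x f + \bigl(L[f] + \bu - v\bigr)\cdot\nabla_v f = 3\Bigl(1 + \int_{\bbr^6}\varphi(|x-y|)f(t,y,v^*)dy\,dv^*\Bigr)f.
\]
Integrating the original conservative equation over $\bbr^6$ and using the divergence-free property of $\bu$ together with the compact $v$-support of $f$ yields $\|f(t)\|_{L^1} = \|f_0\|_{L^1}$, so the coefficient on the right is bounded by $3(1 + \|\varphi\|_{C^0}\|f_0\|_{L^1})$. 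Introducing characteristics $\dot x = \bu$, $\dot v = L[f] + \bu - v$, the ODE $\frac{d}{dt}f \le C f$ along characteristics gives both non-negativity (by Gronwall from $f_0 \ge 0$ applied to $f$ and $-f$) and the exponential $L^\infty$ bound. I expect this step to be routine once the non-conservative form is in place.

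\textbf{$L^p$ bound on $\rho$.} Integrating the kinetic equation over $v$, and noting that $L[f]f + (\bu-v)f$ is a $v$-divergence of a $v$-compactly supported quantity, I obtain the pure transport equation
\[
 \rho_t + \bu\cdot\nabla_x\rho = 0, \qquad \nabla\cdot\bu = 0.
\]
Since $\bu$ is divergence-free, all $L^p$ norms of $\rho$ are preserved along the flow, giving $\|\rho(t)\|_{L^p} = \|\rho_0\|_{L^p}$. The endpoint estimate $\|\rho_0\|_{L^\infty} \le |B(R_0)|\cdot\|f_0\|_{L^\infty} \le C R_0^3\|f_0\|_{L^\infty}$ follows from the $v$-support assumption, and together with the conservation of $\|f_0\|_{L^1}$ (absorbed into the constant $C$ of the statement) the intermediate $p$ follows by the $L^1$--$L^\infty$ interpolation inequality $\|\rho_0\|_{L^p} \le \|\rho_0\|_{L^1}^{1/p}\|\rho_0\|_{L^\infty}^{1-1/p}$.

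\textbf{Energy identity.} I would multiply the kinetic equation by $\tfrac{1}{2}|v|^2$ and integrate over $\bbr^6$, using the divergence-free transport and integration by parts in $v$, to get
\[
 \frac{1}{2}\frac{d}{dt}\int_{\bbr^6}f|v|^2\,dx\,dv = \int_{\bbr^6}L[f]\cdot v\,f\,dx\,dv + \int_{\bbr^6}(\bu-v)\cdot v\,f\,dx\,dv.
\]
For the $L[f]$ term, a symmetrization in $(x,v)\leftrightarrow(y,v^*)$ using the symmetry of $\varphi(|x-y|)$ gives the key identity
\[
 \int_{\bbr^6}L[f]\cdot v\,f\,dx\,dv = -\frac{1}{2}\int_{\bbr^{12}}\varphi(|x-y|)f(t,y,v^*)f(t,x,v)|v^*-v|^2\,dy\,dv^*\,dx\,dv.
\]
In parallel, I take the $L^2$ inner product of the Navier--Stokes equation with $\bu$, using $\nabla\cdot\bu = 0$ to kill the convective and pressure terms, to obtain $\frac{1}{2}\frac{d}{dt}\|\bu\|_{L^2}^2 + \|\nabla\bu\|_{L^2}^2 = \int_{\bbr^6}(v-\bu)\cdot\bu\,f\,dx\,dv$. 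Adding the two identities, the cross terms $\int u\cdot v f$ cancel and the remaining squares combine into $\int |v-\bu|^2 f$. Integrating from $0$ to $T$ yields the stated equality. The symmetrization in the $L[f]$ integral is the one non-trivial manipulation; all other steps are standard integrations by parts justified by the compact $v$-support and the regularity $f \in C([0,T];H^1_\omega)$, $\bu \in C([0,T];H^1)\cap L^2(0,T;D^2)$ provided by classical solutions.
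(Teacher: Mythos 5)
Your proposal is correct and follows essentially the same route as the paper: the explicit solution along the characteristics $\dot X=\bu$, $\dot V=L[f]+\bu-V$ of the non-conservative form $\frac{d}{dt}f=3(1+a)f$ for part (1), the transport equation $\rho_t+\bu\cdot\nabla\rho=0$ with $L^p$-conservation and $L^1$--$L^\infty$ interpolation for part (2), and the $\frac12|v|^2$-moment plus the $L^2$ pairing with $\bu$, with the symmetrization of the $\varphi$-kernel, for part (3). No gaps; the paper's proof is the same argument in the same order.
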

\begin{proof}
(1)\ Denote by $(X(t;x_0,v_0),V(t;x_0,v_0))$ the characteristic to $\eqref{eq-cs-ns}_1$, emanating from $(x_0,v_0)$. It verifies that
\begin{equation} \label{eq-charac}
\begin{dcases}
  \frac{d X}{d t}=\bu(t,X), \\
  \frac{d V}{d t}=\int_{\bbr^{6}} \varphi(|X-y|)f(t, y,v^*)(v^*-V)dy dv^*+\bu(t,X)-V.
\end{dcases}
\end{equation}
\begin{equation} \label{eq-charac-ini}
 X(0;x_0,v_0)=x_0, \qquad V(0;x_0,v_0)=v_0.
\end{equation}
Recall that
\[
 a(t,x)=\int_{\bbr^{6}} \varphi(|x-y|)f(t, y,v^*) dy dv^*,
\]
\[
 \mathbf{b} (t,x)=\int_{\bbr^{6}} \varphi(|x-y|)f(t, y, v^*)v^* dy dv^*.
\]
Solving the equation \eqref{eq-kine-cs} by the method of characteristics gives
\begin{equation} \label{eq-kin-cs-positive}
 f(t,X(t;x_0,v_0),V(t;x_0,v_0))=f_0(x_0,v_0)\exp \left(3 \int_0^t \Big[1+a(\tau,X(\tau))\Big] d \tau \right)\ge 0.
\end{equation}
From \eqref{eq-kin-conser-mass}, \eqref{eq-kin-cs-positive} and the initial condition $f_0(x, v)\in L^{\infty}(\bbr^3\times\bbr^3)$, we deduce that
\begin{equation} \label{eq-kin-cs-linfty}
  \sup_{0\le t\le T}\|f(t)\|_{L^{\infty}}\le \|f_0\|_{L^{\infty}}\exp\big(CT\big),\quad \text{where $C:=C\Big(\|\varphi\|_{C^0}, \|f_0\|_{L^1}\Big)$}.
\end{equation}
(2)\ Integrating $\eqref{eq-cs-ns}_1$ with respect to $v$ over $\bbr^3$ gives
\begin{equation} \label{eq-parden-mascon}
  \rho_t+\bu\cdot\nabla\rho=0.
\end{equation}
Multiplying \eqref{eq-parden-mascon} by $p\rho^{p-1}$, $1\le p<\infty$, and integrating the resulting equation over $[0,t]\times\bbr^3$, we obtain that for $t\in [0,T]$
\begin{equation} \label{eq-parden-pnorm}
  \|\rho(t)\|_{L^p}=\|\rho_0\|_{L^p}\le \|\rho_0\|_{L^1}^{\frac1p}\|\rho_0\|_{L^\infty}^{1-\frac1p}\le C\|f_0\|_{L^{\infty}}^{1-\frac1p}R_0^{3-\frac3p}
\end{equation}
for some constat $C>0$, where we have used the incompressible condition $\nabla\cdot\bu=0$.
Since
\begin{equation} \label{eq-parden-inftynorm}
  \|\rho(t)\|_{L^\infty}=\lim_{p\to\infty}\|\rho(t)\|_{L^p}=\lim_{p\to\infty}\|\rho_0\|_{L^p}=\|\rho_0\|_{L^\infty} \le C\|f_0\|_{L^{\infty}}R_0^{3},
\end{equation}
we combine \eqref{eq-parden-pnorm} with \eqref{eq-parden-inftynorm} to obtain that for all $1\le p\le\infty$,
\[
 \sup_{0\le t\le T}\|\rho(t)\|_{L^{p}}=\|\rho_0\|_{L^p}\le C\|f_0\|_{L^{\infty}}^{1-\frac1p}R_0^{3-\frac3p},\quad \text{for some constant $C>0$.}
\]
(3)\  Multiplying $\eqref{eq-cs-ns}_1$ by $\frac12 |v|^2$, and integrating the resulting equation over $\bbr^3\times\bbr^3$, we have
\begin{equation}\label{eq-cs-ener-dt}
  \begin{gathered}
    \frac{d}{dt}\int_{\bbr^6}\frac12 f |v|^2dx dv +\frac12 \int_{\bbr^6}\int_{\bbr^6}\varphi(|x-y|)f(t,y,v^*)f(t,x,v)|v^*-v|^2dy dv^* dx dv \\
    =\int_{\bbr^6}fv\cdot(\bu-v)dx dv.
  \end{gathered}
\end{equation}
Taking the dot product of $\eqref{eq-cs-ns}_2$ with $\bu$, and integrating the resulting equation over $\bbr^3$, we deduce that
\begin{equation}\label{eq-s-ener-dt}
    \frac12 \frac{d}{dt}\int_{\bbr^3} |\bu|^2 dx +\|\nabla \bu\|_{L^2}^2
    =\int_{\bbr^6}f\bu\cdot(v-\bu)dx dv.
\end{equation}
Adding \eqref{eq-cs-ener-dt} to \eqref{eq-s-ener-dt}, and integrating the resulting equation over $[0,T]$, we arrive at Lemma \ref{lm-cs-s-emapriori}(3). This completes the proof.
\end{proof}
The following lemma is devoted to estimating $\|\bu\|_{L^3(0,T;L^9)}$. It plays a crucial role in the estimate of $\sup_{0\le t\le T}\|\nabla\bu(t)\|_{L^2}$, which is indispensable in applying Proposition \ref{prop-s} to the incompressible Navier--Stokes equations.
\begin{lemma}\label{lm-u-serr-cond}
Under the conditions in Theorem \ref{thm-exist}, if $(f, \bu)$ is a classical solution to \eqref{eq-cs-ns}-\eqref{eq-sys-inidata} in $[0, T]$, then it holds  that
\[
 \|\bu\|_{L^3(0,T;L^9)}^3 \le C\Big(\|\bu_0\|_{L^3}^3+ \|\bu_0\|_{L^3}^6+(1+\|\rho_0\|_{L^{\infty}})E_0 \Big),
\]
for some constant $C>0$ independent of the initial data.
\end{lemma}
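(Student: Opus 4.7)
Following the splitting strategy outlined in the introduction, decompose $\bu = \bar\bu + \bw$, where $\bar\bu$ solves the transport--diffusion problem $\bar\bu_t + \bu \cdot \nabla \bar\bu = \Delta \bar\bu$ with $\bar\bu|_{t=0} = \bu_0$, and $\bw$ solves $\bw_t + \bu \cdot \nabla \bw + \nabla P = \Delta \bw + \bh$ with $\bh := \int_{\bbr^3}(v-\bu)f \, dv$ and $\bw|_{t=0} = 0$. For each piece I will run an $L^3$-type energy estimate: multiply the component equation by $3|\bar\bu|\bar\bu$ or $3|\bw|\bw$, integrate over $\bbr^3$, use $\nabla\cdot\bu=0$ to kill the transport term, and apply the Sobolev embedding $\||g|^{3/2}\|_{L^6}^2 \le C\|\nabla|g|^{3/2}\|_{L^2}^2$ so that the dissipation controls $\|g\|_{L^9}^3$ in time. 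Since $\|\bu\|_{L^9}^3 \le C\bigl(\|\bar\bu\|_{L^9}^3 + \|\bw\|_{L^9}^3\bigr)$, the conclusion follows once each piece is handled.

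For $\bar\bu$, incompressibility gives directly
\[
 \|\bar\bu(t)\|_{L^3}^3 + c\int_0^t \|\bar\bu(\tau)\|_{L^9}^3 \, d\tau \le \|\bu_0\|_{L^3}^3,
\]
which accounts for the $\|\bu_0\|_{L^3}^3$ contribution in the stated bound. For $\bw$, the analogous calculation produces
\[
 \frac{d}{dt}\|\bw\|_{L^3}^3 + c\|\bw\|_{L^9}^3 \le \left|\int_{\bbr^3} 3|\bw|\bw \cdot \nabla P \, dx\right| + \left|\int_{\bbr^3} 3|\bw|\bw \cdot \bh \, dx\right|,
\]
and the pressure term is treated by the decomposition $P=P_1+P_2$ with $\Delta P_1 = -\nabla\cdot(\bu\cdot\nabla\bu)$ and $\Delta P_2 = \nabla\cdot\bh$. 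The $P_1$ contribution is integrated by parts and estimated via the Calder\'on--Zygmund bound $\|P_1\|_{L^p} \le C\|\bu\|_{L^{2p}}^2$ combined with H\"older and Sobolev, expanding $\bu=\bar\bu+\bw$ so that the $\bw$-self-interaction is absorbed into the left-hand side under the smallness hypothesis, while the cross-terms with $\bar\bu$ are controlled in time by the already-established Step~1 estimate, generating the $\|\bu_0\|_{L^3}^6$ term. The $P_2$ contribution is handled without integration by parts, using $\|\nabla P_2\|_{L^p} \le C\|\bh\|_{L^p}$ together with H\"older in $x$.

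The coupling-type terms involving $\bh$ are controlled through the pointwise Cauchy--Schwarz bound
\[
 |\bh(t,x)|^2 \le \rho(t,x) \int_{\bbr^3} f(t,x,v)|v-\bu|^2 \, dv,
\]
which combined with Lemma \ref{lm-cs-s-emapriori}(2)--(3) yields $\int_0^T \|\bh(\tau)\|_{L^2}^2 \, d\tau \le \|\rho_0\|_{L^\infty} E_0$, producing the $(1+\|\rho_0\|_{L^\infty})E_0$ contribution after H\"older in time and Young's inequality. The main obstacle is the $P_1$ pressure term: the cubic expression $\int |\bw|\bw\cdot\nabla P_1\, dx$ is scale-critical with respect to the $\|\bw\|_{L^9}^3$ dissipation, and absorbing it requires exploiting the smallness of the $L^3$ norm of $\bu$ along the solution. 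This is where the smallness hypothesis from Theorem \ref{thm-exist} is essential: it allows a continuity/bootstrap argument in which the coefficient multiplying the dissipation stays strictly below one, closing the estimate and yielding the stated bound.
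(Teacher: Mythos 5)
Your proposal follows essentially the same route as the paper: the same splitting $\bu=\bar\bu+\bw$, the same $L^3$-energy estimates with the dissipation functional $\int|\cdot||\nabla\cdot|^2\,dx$ controlling the $L^9$ norm via Sobolev embedding, the same pressure decomposition $P=P_1+P_2$ with the Calder\'on--Zygmund bounds, the same use of the energy identity to bound $\bh$ in $L^{3/2}$ by $(1+\|\rho_0\|_{L^\infty})\int|v-\bu|^2f\,dxdv$, and the same continuity argument exploiting $\bw|_{t=0}=0$ and the smallness hypothesis to absorb the critical $\|\bw\|_{L^3}I(\bw)$ term. The plan is correct and matches the paper's proof in all essential respects.
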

\begin{proof}
Using the splitting approach, we decompose $\bu=\bar\bu +\bw$, with $\bar\bu$ and $\bw$ determined by the following equations:
\begin{equation} \label{eq-ns-baru}
     \begin{dcases}
         \bar\bu_t+ \bu \cdot \nabla \bar\bu =\Delta \bar\bu, \\
         \bar\bu|_{t=0}=\bu_0,
     \end{dcases}
\end{equation}
and
\begin{equation} \label{eq-ns-w}
     \begin{dcases}
         \bw_t+ \bu \cdot \nabla \bw +\nabla P=\Delta \bw +\int_{\bbr^3}(v-\bu)fdv, \\
         \bw|_{t=0}=0,
     \end{dcases}
\end{equation}
respectively. Taking the dot product of \eqref{eq-ns-baru} with $3|\bar\bu|\bar\bu$, and integrating the resulting equation over $\bbr^3$, we obtain
\begin{equation}\label{eq-baru-ener-dt}
    \frac{d}{dt} \|\bar\bu\|_{L^3}^3 dx +3\int_{\bbr^3} |\bar\bu||\nabla \bar\bu|^2 dx\le 0,
\end{equation}
where we have used the fact that
\[
 \begin{aligned}
  -\int_{\bbr^3} |\bar\bu|\bar\bu \cdot\Delta\bar\bu dx=&\int_{\bbr^3} \nabla\Big(|\bar\bu|\bar\bu\Big):\nabla\bar\bu dx \ge
  \int_{\bbr^3} |\bar\bu||\nabla \bar\bu|^2 dx,
  \end{aligned}
\]
and
\[
 \begin{aligned}
  3\int_{\bbr^3} \bu \cdot \nabla \bar\bu \cdot \bar\bu |\bar\bu|dx=&\int_{\bbr^3} \bu \cdot \nabla |\bar\bu|^3 dx =0.
  \end{aligned}
\]
Integrating \eqref{eq-baru-ener-dt} over $[0, T]$ gives
\begin{equation}\label{eq-baru-ener}
    \sup_{0\le t\le T} \|\bar\bu(t)\|_{L^3}^3 dx +3\int_0^T\int_{\bbr^3} |\bar\bu||\nabla \bar\bu|^2 dx dt \le \|\bar\bu_0\|_{L^3}^3.
\end{equation}
For convenience of notation, we define $I(\bar\bu)$ as
\[
  I(\bar\bu):=\int_{\bbr^3} |\bar\bu||\nabla \bar\bu|^2 dx,
\]
similarly for the following $I(\bw)$, and $\bh(t,x)$ is defined as
\[
 \bh(t,x):= \int_{\bbr^3} (v-\bu)f dv.
\]
Taking the dot product of \eqref{eq-ns-w} with $3|\bw|\bw$, and integrating the resulting equation over $\bbr^3$, we deduce that for $t\in [0, T]$,
\begin{equation}\label{eq-w-ener-dt}
  \begin{aligned}
    \frac{d}{dt} \|\bw\|_{L^3}^3  +3I(\bw) \le& -3\int_{\bbr^3} |\bw| \bw \cdot\nabla P dx +3\int_{\bbr^3} |\bw| \bw \cdot \bh dx,
  \end{aligned}
\end{equation}
where the pressure $P$ is determined by the following equation:
\[
 \Delta P=-\nabla\cdot(\bu\cdot\nabla\bu)+\nabla\cdot\bh.
\]
Nevertheless, appearance of the additional term $\nabla\cdot\bh$ disables us to use integration by parts to estimate the term $\int_{\bbr^3} |\bw| \bw \cdot\nabla P dx$. We overcome this difficulty by decomposing $P=P_1+P_2$, with $P_1$ and $P_2$ governed by
\[
 \Delta P_1=-\nabla\cdot(\bu\cdot\nabla\bu)\quad\text{and}\quad \Delta P_2=\nabla\cdot\bh,
\]
respectively. Then it follows from the Calder\'on--Zygmund Theorem that
\[
 \|P_1\|_{L^{\frac94}}\le C\|\bu\|_{L^{\frac92}}^{2}\quad \text{and}\quad  \|\nabla P_2\|_{L^{\frac32}}\le C\|\bh\|_{L^{\frac32}},
\]
for some constant $C>0$. We estimate the right-hand side of \eqref{eq-w-ener-dt} as follows.
\begin{equation}\label{eq-w-ener-dt-rh}
  \begin{aligned}
    & -3\int_{\bbr^3} |\bw| \bw \cdot\nabla P dx +3\int_{\bbr^3} |\bw| \bw \cdot \bh dx\\
    \le&-3\int_{\bbr^3} |\bw| \bw \cdot(\nabla P_1+\nabla P_2) dx +3\|\bw\|_{L^6}^2 \|\bh\|_{L^{\frac32}}\\
    \le&3 \int_{\bbr^3} \nabla\cdot\Big(|\bw| \bw\Big) P_1 dx -3\int_{\bbr^3} |\bw| \bw \cdot \nabla P_2 dx +3\|\bw\|_{L^6}^2 \|\bh\|_{L^{\frac32}}\\
    \le&C \int_{\bbr^3} |\bw| |\nabla \bw| |P_1| dx +C\|\bw\|_{L^6}^2 \|\nabla P_2\|_{L^{\frac32}} +3\|\bw\|_{L^6}^2 \|\bh\|_{L^{\frac32}}\\
    \le&C \left\| |\bw|^{\frac12} |\nabla \bw| \right\|_{L^2} \|\bw\|_{L^9}^{\frac12} \|P_1\|_{L^{\frac94}} +C\|\bw\|_{L^6}^2 \|\bh\|_{L^{\frac32}}\\
    \le&CI^{\frac23}(\bw) \|\bu\|_{L^{\frac92}}^{2} +C\|\bw\|_{L^6}^2 \|\bh\|_{L^{\frac32}}\\
    \le&CI^{\frac23}(\bw) \bigg(\|\bar\bu\|_{L^{\frac92}}^{2} +\|\bw\|_{L^{\frac92}}^{2}\bigg) +C\|\bw\|_{L^6}^2 \|\bh\|_{L^{\frac32}}\\
    \le&CI^{\frac23}(\bw) \|\bar\bu\|_{L^3}\|\bar\bu\|_{L^9} +CI^{\frac23}(\bw) \|\bw\|_{L^3}\|\bw\|_{L^9} +C\|\bw\|_{L^3}\|\bw\|_{L^9}^3 +\frac12\|\bh\|_{L^{\frac32}}^2\\
    \le& CI^{\frac23}(\bw) \|\bar\bu\|_{L^3} I^{\frac13}(\bar\bu) +C\|\bw\|_{L^3} I(\bw) +\frac12\|\bh\|_{L^{\frac32}}^2\\
    \le& I(\bw)  +K_1\|\bw\|_{L^3}^3 I(\bw)+C\|\bar\bu\|_{L^3}^3 I(\bar\bu) +\Big(1+\|\rho_0\|_{L^{\infty}}\Big)\int_{\bbr^6}|v-\bu|^2f dx dv,
  \end{aligned}
\end{equation}
for some positive constants $C$ and $K_1$. In the above estimate, we have used the Sobolev inequality
\begin{equation}\label{eq-w-sobo}
  \|\bw\|_{L^9}^3 \le C I(\bw) \quad\text{for some constant $C>0$,}
\end{equation}
similarly for $\|\bar\bu\|_{L^9}^3$, and the fact that
\[
 \begin{aligned}
  \frac12\|\bh\|_{L^{\frac32}}^2\le \|\bh\|_{L^{1}}^2 +\|\bh\|_{L^{2}}^2
  \le\Big(1+\|\rho_0\|_{L^{\infty}}\Big)\int_{\bbr^6}|v-\bu|^2f dx dv.
  \end{aligned}
\]
Substituting \eqref{eq-w-ener-dt-rh} into \eqref{eq-w-ener-dt} and integrating the resulting equation over $[0,t]$, $t\in (0,T]$, we have by Lemma \ref{lm-cs-s-emapriori}(3) and \eqref{eq-baru-ener} that
\begin{equation}\label{eq-w-ener}
  \begin{aligned}
    &\sup_{0\le \tau\le t} \|\bw(\tau)\|_{L^3}^3  +2\int_0^t I(\bw) d\tau \\
    \le& K_1\int_0^t \|\bw\|_{L^3}^3 I(\bw) d\tau +C\sup_{0\le \tau\le t} \|\bar\bu(\tau)\|_{L^3}^3\int_0^t I(\bar\bu) d\tau\\
    &+\Big(1+\|\rho_0\|_{L^{\infty}}\Big)\int_0^t \int_{\bbr^6}|v-\bu|^2f dx dv d\tau\\
    \le&K_1\int_0^t \|\bw\|_{L^3}^3 I(\bw) d\tau + C\|\bu_0\|_{L^3}^6+(1+\|\rho_0\|_{L^{\infty}})E_0,
  \end{aligned}
\end{equation}
for some constants $K_1, C>0$, independent of the initial data. Since $\|\bw(0)\|_{L^3}^3=0$, there must exist $t^*\in (0, T]$ such that
\begin{equation}\label{eq-w-ener-smal}
 \|\bw(t)\|_{L^3}^3 <2\e_0 \quad\text{for $t\in [0, t^*)$,}
\end{equation}
where $\e_0$ satisfies $2K_1\e_0=1.$ If
\[
 C\|\bu_0\|_{L^3}^6+(1+\|\rho_0\|_{L^{\infty}})E_0\le\e_0,
\]
it follows from \eqref{eq-w-ener} and \eqref{eq-w-ener-smal} that
\begin{equation}\label{eq-w-ener-est}
  \begin{aligned}
    &\sup_{0\le \tau\le t^*} \|\bw(\tau)\|_{L^3}^3  +\int_0^{t^*} I(\bw) d\tau
    \le C\|\bu_0\|_{L^3}^6+(1+\|\rho_0\|_{L^{\infty}})E_0\le\e_0.
  \end{aligned}
\end{equation}
By continuity argument, we can show that $t^*=T$. Thus, it holds that
\begin{equation}\label{eq-w-ener-esteqv}
  \begin{aligned}
    &\sup_{0\le t\le T} \|\bw(t)\|_{L^3}^3  +\int_0^{T} I(\bw) dt
    \le C\|\bu_0\|_{L^3}^6+(1+\|\rho_0\|_{L^{\infty}})E_0.
  \end{aligned}
\end{equation}
Combining with \eqref{eq-baru-ener} and \eqref{eq-w-sobo}, we deduce that
\begin{equation}\label{eq-u-seri-cond}
  \begin{aligned}
    \|\bu\|_{L^3(0,T; L^9)}^3\le&C\Big(\|\bar\bu\|_{L^3(0,T; L^9)}^3+\|\bw\|_{L^3(0,T; L^9)}^3\Big)\\
    \le& C\int_0^{T} I(\bar\bu) dt+C\int_0^{T} I(\bw) dt\\
    \le& C\Big(\|\bu_0\|_{L^3}^3+ \|\bu_0\|_{L^3}^6+(1+\|\rho_0\|_{L^{\infty}})E_0 \Big),
  \end{aligned}
\end{equation}
for some constant $C>0$ independent of the initial data.
\end{proof}
In fact, $\bu\in {L^3(0,T; L^9)}$ satisfies the Serrin condition, which can be used to derive the estimate on $\sup_{0\le t\le T}\|\nabla \bu\|_{L^2}$.
\begin{lemma}\label{lm-cs-s-graduinftapriori}
Under the conditions in Theorem \ref{thm-exist}, if $(f, \bu)$ is a classical solution to \eqref{eq-cs-ns}-\eqref{eq-sys-inidata} in $[0, T]$, then it holds that
\[
 \sup_{0\le t\le T}\|\nabla \bu(t)\|_{L^2}^2 +\int_0^T \|\nabla^2\bu\|_{L^2}^2dt \le C\Big(\|\nabla \bu_0\|_{L^2}^2+\|\rho_0\|_{L^{\infty}}E_0\Big)\exp\left(C(\varepsilon_0+\varepsilon_0^{\frac12})\right),
\]
for some constant $C>0$ independent of the initial data.
\end{lemma}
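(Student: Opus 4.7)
The plan is to estimate $\|\nabla\bu(t)\|_{L^2}^2$ by testing the momentum equation with $-\Delta\bu$ and invoking the Serrin-type bound from Lemma \ref{lm-u-serr-cond}. Concretely, I would take the inner product of $\eqref{eq-cs-ns}_2$ with $-\Delta\bu$, use the incompressibility condition to eliminate the pressure, and integrate by parts in the time derivative to obtain
\[
 \frac12\frac{d}{dt}\|\nabla\bu\|_{L^2}^2+\|\Delta\bu\|_{L^2}^2=\int_{\bbr^3}\bu\cdot\nabla\bu\cdot\Delta\bu\,dx-\int_{\bbr^3}\bh\cdot\Delta\bu\,dx,
\]
where $\bh(t,x):=\int_{\bbr^3}(v-\bu)f\,dv$ as in the previous lemma. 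The two terms on the right are the only ones I need to control.

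For the convective term, I would use H\"older's inequality with exponents $(9,18/7,2)$ together with the Gagliardo--Nirenberg interpolation $\|\nabla\bu\|_{L^{18/7}}\le C\|\nabla\bu\|_{L^2}^{2/3}\|\Delta\bu\|_{L^2}^{1/3}$ in $\bbr^3$, and then Young's inequality with exponents $(3/2,3)$:
\[
 \left|\int_{\bbr^3}\bu\cdot\nabla\bu\cdot\Delta\bu\,dx\right|\le C\|\bu\|_{L^9}\|\nabla\bu\|_{L^2}^{2/3}\|\Delta\bu\|_{L^2}^{4/3}\le\frac12\|\Delta\bu\|_{L^2}^2+C\|\bu\|_{L^9}^3\|\nabla\bu\|_{L^2}^2.
\]
For the coupling term, H\"older yields $|\bh|^2\le\rho\int f|v-\bu|^2\,dv\le\|\rho_0\|_{L^\infty}\int f|v-\bu|^2\,dv$ by Lemma \ref{lm-cs-s-emapriori}(2), so
\[
 \left|\int_{\bbr^3}\bh\cdot\Delta\bu\,dx\right|\le\frac14\|\Delta\bu\|_{L^2}^2+C\|\rho_0\|_{L^\infty}\int_{\bbr^6}f|v-\bu|^2\,dx\,dv.
\]

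Combining these bounds and absorbing the $\|\Delta\bu\|_{L^2}^2$ terms on the left, I would arrive at
\[
 \frac{d}{dt}\|\nabla\bu\|_{L^2}^2+\frac12\|\Delta\bu\|_{L^2}^2\le C\|\bu\|_{L^9}^3\|\nabla\bu\|_{L^2}^2+C\|\rho_0\|_{L^\infty}\int_{\bbr^6}f|v-\bu|^2\,dx\,dv,
\]
and then apply Gronwall's inequality on $[0,T]$. The nonhomogeneous contribution is controlled by $C\|\rho_0\|_{L^\infty}E_0$ thanks to the energy identity in Lemma \ref{lm-cs-s-emapriori}(3). The Gronwall exponent is $C\int_0^T\|\bu\|_{L^9}^3\,dt=C\|\bu\|_{L^3(0,T;L^9)}^3$, which by Lemma \ref{lm-u-serr-cond} is bounded by $C(\|\bu_0\|_{L^3}^3+\|\bu_0\|_{L^3}^6+(1+\|\rho_0\|_{L^\infty})E_0)$. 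Under the smallness hypothesis $C\|\bu_0\|_{L^3}^6+(1+\|\rho_0\|_{L^\infty})E_0\le\varepsilon_0$, the latter is dominated by $C(\varepsilon_0+\varepsilon_0^{1/2})$, with the $\varepsilon_0^{1/2}$ factor coming from $\|\bu_0\|_{L^3}^3\le(\|\bu_0\|_{L^3}^6)^{1/2}\le C\varepsilon_0^{1/2}$. This produces exactly the stated bound with the factor $\exp(C(\varepsilon_0+\varepsilon_0^{1/2}))$.

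The main subtlety is making sure the exponent $3$ in $\|\bu\|_{L^9}^3$ matches the Serrin regularity produced by Lemma \ref{lm-u-serr-cond}; the $(9,18/7)$ H\"older split together with the $1/3$-$2/3$ Gagliardo--Nirenberg interpolation is precisely tuned so that the power of $\|\Delta\bu\|_{L^2}$ obtained is $4/3<2$ (so it is absorbable) and the resulting factor on $\|\nabla\bu\|_{L^2}^2$ is $\|\bu\|_{L^9}^3$, which is integrable in time by the Serrin condition. Everything else is routine energy-inequality bookkeeping.
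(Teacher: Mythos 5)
Your proposal is correct and follows essentially the same route as the paper: testing with $-\Delta\bu$, the H\"older split $\|\bu\|_{L^9}\|\nabla\bu\|_{L^{18/7}}\|\Delta\bu\|_{L^2}$ with the Gagliardo--Nirenberg interpolation, the bound $\int_0^T\|\bh\|_{L^2}^2\,dt\le\|\rho_0\|_{L^\infty}E_0$, and Gronwall closed by the Serrin-type estimate of Lemma \ref{lm-u-serr-cond}. The only cosmetic difference is that you spell out explicitly why $\|\bu_0\|_{L^3}^3\le C\varepsilon_0^{1/2}$ yields the $\varepsilon_0^{1/2}$ in the exponential factor, which the paper leaves implicit.
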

\begin{proof}
Taking the dot product of $\eqref{eq-cs-ns}_2$ with $-\Delta \bu$, and integrating the resulting equation over $\bbr^3$, we obtain
\begin{equation}\label{eq-aprigu}
  \begin{aligned}
   &\frac{d}{dt}\|\nabla \bu \|_{L^2}^{2}+ K_2\|\nabla^2 \bu \|_{L^2}^{2}\\
   \le&\int_{\bbr^3}\bu \cdot\nabla \bu\cdot\Delta \bu dx -\int_{\bbr^3}\int_{\bbr^3} (v-\bu ) f dv\cdot\Delta \bu dx\\
   \le&\|\bu \|_{L^{9}} \|\nabla \bu \|_{L^{\frac{18}{7}}}\|\Delta \bu \|_{L^2}+\|\bh \|_{L^2}\|\Delta \bu \|_{L^2}\\
   \le&C \|\bu \|_{L^9} \|\nabla \bu \|_{L^2}^{\frac23} \|\nabla^2 \bu \|_{L^2}^{\frac43} +C\|\bh \|_{L^2}\|\nabla^2 \bu \|_{L^2}\\
   \le&\frac {K_2}{2} \|\nabla^2 \bu \|_{L^2}^2 +C\|\bu \|_{L^9}^{3}\|\nabla \bu \|_{L^2}^{2} +C\|\bh \|_{L^2}^2,
  \end{aligned}
\end{equation}
for some positive constants $K_2$ and $C$. Here we have used the Gagliardo--Nirenberg inequality
\begin{equation}\label{eq-gn-ulinfty}
 \|\nabla\bu \|_{L^{\frac{18}{7}}}  \le C \|\nabla\bu \|_{L^2}^{\frac23} \|\nabla^2 \bu \|_{L^2}^{\frac13} \quad \text{in $\bbr^3$,}
\end{equation}
and the elliptic estimate
\begin{equation}\label{eq-gtuelli}
 K_2\|\nabla^2 \bu \|_{L^2}^2 \le  \|\Delta \bu \|_{L^2}^2.
\end{equation}
Using Lemma \ref{lm-u-serr-cond} and the fact that
\[
 \int_0^T \|\bh\|_{L^2}^2 dt \le \|\rho_0\|_{L^{\infty}}E_0,
\]
we solve the above Gronwall inequality \eqref{eq-aprigu} to obtain
\begin{equation}\label{eq-gultwo}
 \sup_{0\le t\le T}\|\nabla \bu(t)\|_{L^2}^2+
 \int_0^T \|\nabla^2 \bu \|_{L^2}^{2} dt \le C\Big(\|\nabla \bu_0\|_{L^2}^2+\|\rho_0\|_{L^{\infty}}E_0\Big)\exp\left(C(\varepsilon_0+\varepsilon_0^{\frac12})\right),
\end{equation}
for some constant $C>0$ independent of the initial data.
\end{proof}
Having obtained the estimate on $\sup_{0\le t\le T}\|\nabla \bu(t)\|_{L^2}$, we then use Proposition \ref{prop-s} to derive the estimate on $\|\nabla^2\bu\|_{L^s(0,T: L^q)}$, which leads to the estimate on $\int_0^T \|\bu(t)\|_{W^{1,\infty}} dt$ by interpolation.
\begin{lemma}\label{lm-uwone-est}
Under the conditions in Theorem \ref{thm-exist}, if $(f, \bu)$ is a classical solution to \eqref{eq-cs-ns}-\eqref{eq-sys-inidata} in $[0, T]$, then it holds  that
\[
 \begin{aligned}
   &(1)\ R(t)\le R_0+C,\quad \text{for $t\in [0, T]$};\\
   &(2)\ \|\nabla^2\bu\|_{L^s(0,T; L^q)}\le C+CT^{\frac1s};\\
   &(3)\ \int_0^T \|\bu(t)\|_{W^{1,\infty}} dt \le C(1+T),
 \end{aligned}
\]
where $C:=C(R_0, E_0, \|\rho_0\|_{L^{\infty}}, \|\bu_0\|_{H^1},\|\bu_0\|_{L^3})$.
\end{lemma}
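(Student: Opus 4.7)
The plan is to establish the three estimates in sequence, combining the a priori bounds from Lemmas \ref{lm-cs-s-emapriori} and \ref{lm-cs-s-graduinftapriori} with the characteristic flow for (1), the maximal regularity estimate (Proposition \ref{prop-s}) for (2), and Gagliardo--Nirenberg interpolation plus H\"older in time for (3). The main obstacle lies in (1): a direct use of Proposition \ref{prop-kine-cs-wp}(1) only supplies the growing bound $R(t)\le R_{0}+\int_{0}^{t}(\|\mathbf{b}\|_{L^{\infty}}+\|\bu\|_{L^{\infty}})d\tau$. The key idea is that the damping term $-(1+a)V$ in the characteristic ODE furnishes an exponential weight which, paired with the time-integrated (and $T$-uniform) bound on $\|\nabla^{2}\bu\|_{L^{2}}$ from Lemma \ref{lm-cs-s-graduinftapriori}, converts the otherwise uncontrolled integral of $\|\bu\|_{L^{\infty}}^{2}$ into a uniformly bounded quantity.

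For part (1), I would track $|V|^{2}$ along the characteristic flow \eqref{eq-charac}. Rewriting $\eqref{eq-charac}_{2}$ as $\dot V=\mathbf{b}(t,X)+\bu(t,X)-(1+a(t,X))V$ and using $a\ge 0$, Young's inequality yields
\[
\tfrac{d}{dt}|V|^{2}+|V|^{2}\le C\bigl(\|\mathbf{b}(t)\|_{L^{\infty}}^{2}+\|\bu(t)\|_{L^{\infty}}^{2}\bigr).
\]
Lemma \ref{lm-cs-s-emapriori}(3) and \eqref{eq-estb} give $\|\mathbf{b}(t)\|_{L^{\infty}}\le(2E_{0})^{1/2}$, while the Gagliardo--Nirenberg inequality $\|\bu\|_{L^{\infty}}^{2}\le C\|\nabla\bu\|_{L^{2}}\|\nabla^{2}\bu\|_{L^{2}}$ combined with Lemma \ref{lm-cs-s-graduinftapriori} yields
\[
\int_{0}^{t}e^{-(t-s)}\|\bu(s)\|_{L^{\infty}}^{2}ds\le C\sup_{0\le s\le t}\|\nabla\bu(s)\|_{L^{2}}\Bigl(\int_{0}^{t}e^{-2(t-s)}ds\Bigr)^{1/2}\Bigl(\int_{0}^{T}\|\nabla^{2}\bu\|_{L^{2}}^{2}ds\Bigr)^{1/2}\le C
\]
uniformly in $t\in[0,T]$. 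Solving the resulting Gronwall inequality pointwise along characteristics issuing from $|v_{0}|\le R_{0}$ then gives $|V(t)|\le R_{0}+C$, so $R(t)\le R_{0}+C$.

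For part (2), I apply Proposition \ref{prop-s} to $\eqref{eq-cs-ns}_{2}$--$\eqref{eq-cs-ns}_{3}$, viewed as a Stokes system with forcing $\bg=-\bu\cdot\nabla\bu+\bh$ where $\bh:=\int_{\bbr^{3}}(v-\bu)f\,dv$, obtaining
\[
\|\nabla^{2}\bu\|_{L^{s}(0,T;L^{q})}\le C\|\bu_{0}\|_{H^{1}}+C\|\bu\cdot\nabla\bu\|_{L^{s}(0,T;L^{q})}+C\|\bh\|_{L^{s}(0,T;L^{q})}.
\]
Part (1), Lemma \ref{lm-cs-s-emapriori}(2), and the uniform-in-$t$ control of $\|\bu(t)\|_{L^{q}}$ from $\|\bu(t)\|_{H^{1}}$ via Sobolev embedding (Lemma \ref{lm-cs-s-graduinftapriori}) force $\sup_{t}\|\bh(t)\|_{L^{q}}\le C$, hence $\|\bh\|_{L^{s}(0,T;L^{q})}\le CT^{1/s}$. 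The convective term is treated exactly as in Step 1 of the proof of Proposition \ref{prop-loc-exist}: invoke \eqref{eq-gn-ineq} with $\theta=3(q-2)/(5q-6)$, apply Young's inequality to absorb a fraction of $\|\nabla^{2}\bu\|_{L^{s}(0,T;L^{q})}^{s}$ on the left, relying on $s/(1-\theta)=(5q-6)s/(2q)<2$ from \eqref{eq-sq-rela}, and estimate the residue by $\sup_{t}\|\nabla\bu\|_{L^{2}}$ and $\int_{0}^{T}\|\nabla^{2}\bu\|_{L^{2}}^{2}dt$ from Lemma \ref{lm-cs-s-graduinftapriori}; a H\"older count shows the resulting $T$-power is strictly weaker than $T^{1/s}$.

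For part (3), Gagliardo--Nirenberg in $\bbr^{3}$ gives $\|\nabla\bu\|_{L^{\infty}}\le C\|\nabla\bu\|_{L^{2}}^{1-\eta}\|\nabla^{2}\bu\|_{L^{q}}^{\eta}$ with $\eta=3q/(5q-6)\in(3/4,1)$ for $q\in(3,6)$, together with $\|\bu\|_{L^{\infty}}\le C\|\nabla\bu\|_{L^{2}}^{1/2}\|\nabla^{2}\bu\|_{L^{2}}^{1/2}$. Since $\eta<1<s$, H\"older in time combined with part (2) produces
\[
\int_{0}^{T}\|\bu(t)\|_{W^{1,\infty}}dt\le CT^{1-\eta/s}\|\nabla^{2}\bu\|_{L^{s}(0,T;L^{q})}^{\eta}+CT^{3/4}\le C(1+T),
\]
where the last inequality uses $\|\nabla^{2}\bu\|_{L^{s}(0,T;L^{q})}^{\eta}\le(C+CT^{1/s})^{\eta}\le C+CT^{\eta/s}$ and $1-\eta/s<1$.
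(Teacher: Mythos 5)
Your proposal is correct and follows essentially the same route as the paper's proof: part (1) rests on bounding $\|\mathbf{b}\|_{L^\infty}$ by $E_0^{1/2}$ and controlling $\int_0^T\|\bu\|_{L^\infty}^2\,dt$ uniformly in $T$ via $\|\bu\|_{L^\infty}^2\le C\|\nabla\bu\|_{L^2}\|\nabla^2\bu\|_{L^2}$ together with Lemma \ref{lm-cs-s-emapriori}(3) and Lemma \ref{lm-cs-s-graduinftapriori}, part (2) on Proposition \ref{prop-s} with the Gagliardo--Nirenberg/Young absorption of the convective term and the uniform $L^q$ bound on $\bh$, and part (3) on interpolation plus H\"older in time. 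The only cosmetic differences are that you run a differential inequality for $|V|^2$ instead of the explicit variation-of-constants formula \eqref{eq-charac-v} (the exponential weight you invoke is not actually needed, since $\int_0^T\|\bu\|_{L^\infty}^2\,dt\le C$ already holds without it), and in (3) you apply H\"older directly to the interpolated product rather than first splitting by Young's inequality as in \eqref{eq-grauone-infty}.
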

\begin{proof}
(1)\ Solving the characteristic equation \eqref{eq-charac} yields
\begin{equation}\label{eq-charac-v}
  \begin{aligned}
    V(t) =&v_0 \exp\left(-\int_0^t \Big[1+a(s,X(s))\Big]ds\right)\\
    &+\int_0^t \Big[\mathbf{b}(\tau,X(\tau))+\bu(\tau,X(\tau))\Big]\exp\left(-\int_{\tau}^t \Big[1+a(s,X(s))\Big]ds\right) d\tau.
  \end{aligned}
\end{equation}
Using Lemma \ref{lm-cs-s-emapriori}(3) and the Cauchy inequality, we have
\begin{equation}\label{eq-b-bound}
 \sup_{0\le t\le T}\|\mathbf{b}(t)\|_{L^{\infty}}\le  \left(\int_{\bbr^{6}} f |v|^2 dx dv \right)^{\frac12}\le E_0^{\frac12}.
\end{equation}
From \eqref{eq-charac-v}, we deduce that for $t\in [0, T]$,
\begin{equation}\label{eq-v-bound}
  \begin{aligned}
    R(t) \le&R_0+\int_0^t \|\bu(\tau)\|_{L^{\infty}}^2 d\tau\\
    &+\int_0^t \Big(1+\|\mathbf{b}(\tau)\|_{L^{\infty}}\Big)\exp\left(-\int_{\tau}^t \Big[1+a(s,X(s))\Big]ds\right) d\tau\\
    \le& R_0+C\int_0^t \|\nabla\bu(\tau)\|_{L^2} \|\nabla^2\bu(\tau)\|_{L^2} d\tau\\
    &+\bigg(1+\sup_{0\le \tau\le t}\|\mathbf{b}(\tau)\|_{L^{\infty}}\bigg) \int_0^t \Big[1+a(\tau,X(\tau))\Big]\exp\left(-\int_{\tau}^t \Big[1+a(s,X(s))\Big]ds\right) d\tau\\
    \le&R_0+1+ E_0^{\frac12}+ C \int_0^t \|\nabla\bu(\tau)\|_{L^2}^2 d\tau +C\int_0^t  \|\nabla^2\bu(\tau)\|_{L^2}^2 d\tau\\
    \le& R_0 +C(R_0, E_0, \|\rho_0\|_{L^{\infty}}, \|\bu_0\|_{H^1},\|\bu_0\|_{L^3}),
  \end{aligned}
\end{equation}
where we have used Lemma \ref{lm-cs-s-emapriori}(3),  Lemma \ref{lm-cs-s-graduinftapriori}, \eqref{eq-b-bound} and the Sobolev inequality
\[
 \|\bu(\tau)\|_{L^{\infty}}^2\le C\|\nabla\bu(\tau)\|_{L^2} \|\nabla^2\bu(\tau)\|_{L^2} \quad\text{in $\bbr^3$, for some constant $C>0$.}
\]
(2)\ Applying Proposition \ref{prop-s} to $\eqref{eq-cs-ns}_2-\eqref{eq-cs-ns}_3$, we obtain
\begin{equation}\label{eq-ulsq-est}
  \begin{aligned}
    \|\nabla^2\bu\|_{L^s(0,T;L^q)} \le C\Big(\|\bu_0\|_{H^1}+\|\bu\cdot\nabla\bu\|_{L^s(0,T;L^q)}+\|\bh\|_{L^s(0,T;L^q)}\Big).
  \end{aligned}
\end{equation}
We estimate $\|\bu\cdot\nabla\bu\|_{L^s(0,T;L^q)}$ and $\|\bh\|_{L^s(0,T;L^q)}$ as follows. It is easy to see that
\begin{equation}\label{eq-conv-lq}
 \begin{aligned}
  \|\bu\cdot\nabla\bu\|_{L^q}\le& \|\bu\|_{L^{\infty}} \|\nabla\bu \|_{L^q}\\
  \le& C\|\nabla\bu \|_{L^2}^{1-\theta_1} \|\nabla^2\bu \|_{L^q}^{\theta_1} \|\nabla\bu \|_{L^2}^{1-\theta_2} \|\nabla^2\bu \|_{L^q}^{\theta_2}\\
  \le&C\|\nabla\bu \|_{L^2}^{2-(\theta_1+\theta_2)} \|\nabla^2\bu \|_{L^q}^{\theta_1+\theta_2},
  \end{aligned}
\end{equation}
where we have use the following Gagliardo--Nirenberg inequalities in $\bbr^3$,
\[
  \|\bu\|_{L^{\infty}}\le C\|\nabla\bu \|_{L^2}^{1-\theta_1} \|\nabla^2\bu \|_{L^q}^{\theta_1}\quad \text{and}\quad \|\nabla\bu \|_{L^q}\le C\|\nabla\bu \|_{L^2}^{1-\theta_2} \|\nabla^2\bu \|_{L^q}^{\theta_2},
\]
for some constant $C>0$, with
\[
 -\frac{1-\theta_1}{2}+ \theta_1\left(2-\frac3q\right)=0, \quad -\frac{1-\theta_2}{2}+ \theta_2\left(2-\frac3q\right)=1-\frac3q.
\]
Thus,
\begin{equation}\label{eq-conv-lsq}
 \begin{aligned}
  C\|\bu\cdot\nabla\bu\|_{L^s(0, T;L^q)}\le C\left(\int_0^T \|\nabla\bu\|_{L^{2}}^{rs}dt\right)^{\frac1s}+ \frac12 \|\nabla^2\bu\|_{L^s(0,T;L^q)},
  \end{aligned}
\end{equation}
where $$r:=\frac{2-(\theta_1+\theta_2)}{1-(\theta_1+\theta_2)}=6-\frac6q.$$
Using Lemma \ref{lm-cs-s-emapriori}(2), Lemma \ref{lm-cs-s-graduinftapriori} and \eqref{eq-v-bound}, we have
\begin{equation}\label{eq-h-lq}
 \begin{aligned}
  \|\bh\|_{L^q}=&\left\|\int_{\bbr^3}(v-\bu)f dv\right\|_{L^q} \\
  \le& C\|\rho\|_{L^q}+ C\|\rho\|_{L^{\frac{6q}{6-q}}} \|\nabla\bu\|_{L^2}\\
  \le& C(R_0, E_0, \|\rho_0\|_{L^{\infty}}, \|\bu_0\|_{H^1},\|\bu_0\|_{L^3}).
  \end{aligned}
\end{equation}
Substituting \eqref{eq-conv-lsq} into \eqref{eq-ulsq-est}, we use Lemma \ref{lm-cs-s-graduinftapriori} and \eqref{eq-h-lq} to obtain
\begin{equation}\label{eq-ulsq-est-sim}
  \begin{aligned}
    \|\nabla^2\bu\|_{L^s(0,T;L^q)} \le& C\|\bu_0\|_{H^1}+C\left(\int_0^T \|\nabla\bu\|_{L^{2}}^{rs}dt\right)^{\frac1s}+C\|\bh\|_{L^s(0,T;L^q)}\\
    \le& C+CT^{\frac1s},
  \end{aligned}
\end{equation}
where $C:=C(R_0, E_0, \|\rho_0\|_{L^{\infty}}, \|\bu_0\|_{H^1},\|\bu_0\|_{L^3}).$

\noindent (3)\ From the following Gagliardo--Nirenberg inequality in $\bbr^3$,
\begin{equation}\label{eq-sobogu-linfty}
 \|\nabla\bu \|_{L^{\infty}}  \le C \|\nabla \bu \|_{L^2}^{1-\theta_3} \|\nabla^2 \bu \|_{L^q}^{\theta_3} \quad \text{with $-\frac{1-\theta_3}{2}+ \theta_3\left(2- \frac3q \right)=1$,}
\end{equation}
we deduce that
\begin{equation}\label{eq-grauone-infty}
  \begin{aligned}
   \int_0^T \|\nabla\bu \|_{L^{\infty}} dt\le &C \int_0^T \|\nabla \bu \|_{L^2}^{1-\theta_3} \|\nabla^2 \bu \|_{L^q}^{\theta_3} dt\\
   \le& C \int_0^T \|\nabla \bu \|_{L^2} dt+ C \int_0^T  \|\nabla^2 \bu \|_{L^q} dt\\
   \le& C\left(\int_0^T \|\nabla\bu\|_{L^{2}}^{2}dt\right)^{\frac12}T^{\frac12}+ C\left(\int_0^T \|\nabla^2\bu\|_{L^{q}}^{s}dt\right)^{\frac1s}T^{1-{\frac1s}}\\
    \le& C(1+T),
  \end{aligned}
\end{equation}
where $C:=C(R_0, E_0, \|\rho_0\|_{L^{\infty}}, \|\bu_0\|_{H^1},\|\bu_0\|_{L^3})$ and we have used Lemma \ref{lm-cs-s-emapriori}(3), along with \eqref{eq-ulsq-est-sim}.
Since
\[
 \int_0^T \|\bu \|_{L^{\infty}} dt\le \left(\int_0^T \|\bu \|_{L^{\infty}}^{2}dt\right)^{\frac12}T^{\frac12}\le  C(1+T),
\]
where $C:=C(R_0, E_0, \|\rho_0\|_{L^{\infty}}, \|\bu_0\|_{H^1},\|\bu_0\|_{L^3})$. Combining with \eqref{eq-grauone-infty}, we deduce that
\[
 \int_0^T \|\bu(t)\|_{W^{1,\infty}} dt \le C(1+T) \quad \text{for $C:=C(R_0, E_0, \|\rho_0\|_{L^{\infty}}, \|\bu_0\|_{H^1},\|\bu_0\|_{L^3})$.}
\]
This completes the proof.
\end{proof}
%
%
\section{Proof of the Theorems}\label{sec-glob-exst}
\setcounter{equation}{0}
Combining the local existence result  with the a priori estimates on classical solutions to the coupled system, we prove global existence of strong solutions by continuity argument.
\vskip 3mm
\noindent \textit{Proof of Theorem \ref{thm-exist}.} From Proposition \ref{prop-loc-exist}, it follows that there exists some $T_0>0$ such that \eqref{eq-cs-ns}-\eqref{eq-sys-inidata} admits a unique strong solution in $[0, T_0]$. Take the supremum among all the $T_0$, and define the supremum $T^*$ as  the life span of strong solutions. Next, we demonstrate that $T^*=\infty$ by contradiction. Suppose not, i.e., $T^*<\infty$. We mollify the initial data by convolving with the standard mollifier, and then take limit to the obtained approximate classical solutions. Under conditions in Theorem \ref{thm-exist}, it follows from Proposition \ref{prop-kine-cs-wp} and Lemma \ref{lm-uwone-est} that the local strong solutions satisfy
\begin{equation}\label{eq-rf-est}
 \begin{aligned}
   &\|f(t)\|_{H_{\omega}^1}\le \|f_0\|_{H_{\omega}^1}\exp\Big(C(1+t)\Big) \quad \text{for $t\in [0, T^*)$};\\
   &R(t)\le R_0+C\quad \text{for $t\in [0, T^*)$},
 \end{aligned}
\end{equation}
where $C:=C(R_0, E_0, \|\rho_0\|_{L^{\infty}}, \|\bu_0\|_{H^1},\|\bu_0\|_{L^3})$.
Combining Lemma \ref{lm-cs-s-emapriori} and \ref{lm-cs-s-graduinftapriori}, we deduce that for $t\in [0, T^*)$,
\[
 \|\bu(t)\|_{H^{1}}^2\le E_0 + C\Big(\|\nabla \bu_0\|_{L^2}^2+\|\rho_0\|_{L^{\infty}}E_0\Big)\exp\left(C(\varepsilon_0+\varepsilon_0^{\frac12})\right).
\]
Thus, we have
\begin{equation}\label{eq-uhone-est}
 \|\bu(t)\|_{H^{1}}\le C\bigg(\|\nabla \bu_0\|_{L^2}+\Big(1+\|\rho_0\|_{L^{\infty}}^{\frac12}\Big)E_0^{\frac12}\bigg)\exp\left(C(\varepsilon_0+\varepsilon_0^{\frac12})\right)
\end{equation}
for $t\in [0, T^*)$ and some constant $C>0$. In terms of continuity of $f(t)$ and $\bu(t)$, we can define
\begin{equation}\label{eq-rfulim}
 \begin{aligned}
   &f(T^*):= \lim_{t \to T^*-}f(t) \quad \text{in $H_{\omega}^1(\bbr^3\times \bbr^3)$};\\
   &\bu(T^*):=\lim_{t \to T^*-}\bu(t) \quad \text{in $H^1(\bbr^3)$}.
 \end{aligned}
\end{equation}
From $\eqref{eq-rf-est}_2$, we know
\[
 R(T^*)\le R_0+C(R_0, E_0, \|\rho_0\|_{L^{\infty}}, \|\bu_0\|_{H^1},\|\bu_0\|_{L^3}).
\]
Thus, we can take $\Big(f(T^*), \bu(T^*)\Big)$ as an initial datum, and use Proposition \ref{prop-loc-exist} to continue the local strong solution past $T^*$, which contradicts the definition of $T^*$. Therefore, $T^*=\infty$, i.e., the Cauchy problem \eqref{eq-cs-ns}-\eqref{eq-sys-inidata} admits a unique global-in-time strong solution.

Using the equations $\eqref{eq-cs-ns}_1$ and $\eqref{eq-cs-ns}_2$, we deduce that
\begin{equation}\label{eq-ali-uv-dt}
 \begin{aligned}
   &\frac{d}{dt}\int_{\bbr^6} |v-\bu|^2 f dxdv\\
   =&\int_{\bbr^6} |v-\bu|^2 f_t dxdv -2\int_{\bbr^6} (v-\bu)\cdot\bu_t f dxdv\\
   =&-\int_{\bbr^6}\Big[\bu \cdot \nabla_{x} f+ \nabla_{v} \cdot (L[f]f+(\bu-v)f)\Big] |v-\bu|^2 dxdv\\
   &-2\int_{\bbr^6} \bigg(-\bu\cdot\nabla\bu-\nabla P+\Delta\bu+\int_{\bbr^3} (v-\bu) f dv\bigg)\cdot(v-\bu) f dxdv\\
   =&\int_{\bbr^6} 2L[f]\cdot (v-\bu) f dxdv- 2\int_{\bbr^6} |v-\bu|^2 f dxdv\\
   &+\int_{\bbr^6} 2(v-\bu)\cdot\nabla P f dxdv-2\int_{\bbr^6}\Delta\bu\cdot (v-\bu) f dxdv -2\int_{\bbr^3} |\bh|^2  dx\\
   \le& \int_{\bbr^6}\int_{\bbr^6}\varphi(|x-y|)f(t,y,v^*)f(t,x,v)\Big(|v^*-v|^2 + |v-\bu|^2\Big)dy dv^* dx dv \\
   &- 2\int_{\bbr^6} |v-\bu|^2 f dxdv-2\int_{\bbr^3} \bh\cdot\nabla P dx-2\int_{\bbr^3}\Delta\bu\cdot \bh  dx -2\int_{\bbr^3} |\bh|^2  dx\\
   \le&- \int_{\bbr^6} |v-\bu|^2 f dxdv +\|\nabla P\|_{L^2}^2 +\|\Delta\bu\|_{L^2}^2\\
   & +\int_{\bbr^6}\int_{\bbr^6}\varphi(|x-y|)f(t,y,v^*)f(t,x,v)|v^*-v|^2dy dv^* dx dv.
 \end{aligned}
\end{equation}
It follows from Proposition \ref{prop-s} that
\begin{equation}\label{eq-dudp-ltwo}
 \begin{aligned}
   &\int_0^T \|\nabla P\|_{L^2}^2 +\|\Delta\bu\|_{L^2}^2 dt\\ \le& C\left(\|\bu_0\|_{H^1}^2 +\int_0^T \|\bu\cdot\nabla\bu\|_{L^2}^2 dt +\int_0^T \|\bh\|_{L^2}^2 dt\right)\\
   \le&C\Bigg(\|\bu_0\|_{H^1}^2 +\sup_{0\le t\le T} \|\nabla\bu\|_{L^2}^2 \int_0^T \|\bu\|_{L^{\infty}}^2 dt +\|\rho_0\|_{L^{\infty}} \int_0^T \int_{\bbr^6} |v-\bu|^2 f dxdv dt\Bigg)\\
   \le&C(R_0, E_0, \|\rho_0\|_{L^{\infty}}, \|\bu_0\|_{H^1},\|\bu_0\|_{L^3}),
 \end{aligned}
\end{equation}
for $T\in (0,\infty)$, where we have used Lemma \ref{lm-cs-s-emapriori}(3) and Lemma \ref{lm-cs-s-graduinftapriori}.
Define
\[
 D(t):= \int_{\bbr^6}\int_{\bbr^6}\varphi(|x-y|)f(t,y,v^*)f(t,x,v)|v^*-v|^2dy dv^* dx dv
   +\|\nabla P\|_{L^2}^2 +\|\Delta\bu\|_{L^2}^2.
\]
Combining Lemma \ref{lm-cs-s-emapriori}(3) with \eqref{eq-dudp-ltwo}, we infer that for all $T\in (0, \infty)$,
\begin{equation}\label{eq-dissi-est}
 \begin{aligned}
   \int_0^T D(t) dt
   \le C(R_0, E_0, \|\rho_0\|_{L^{\infty}}, \|\bu_0\|_{H^1},\|\bu_0\|_{L^3}).
 \end{aligned}
\end{equation}
Solving the Gronwall inequality \eqref{eq-ali-uv-dt} yields
\begin{equation}\label{eq-ali-uv}
 \begin{aligned}
   \int_{\bbr^6} |v-\bu|^2 f dxdv\le& \int_{\bbr^6} |v-\bu_0|^2 f_0 dxdv e^{-t} +\int_0^t e^{-(t-s)} D(s) ds\\
   \le&C  e^{-t} + e^{-\frac{t}{2}} \int_0^{\frac{t}{2}} D(s) ds+ \int_{\frac{t}{2}}^t D(s) ds.
 \end{aligned}
\end{equation}
It follows from \eqref{eq-dissi-est} and \eqref{eq-ali-uv} that
\[
 \lim_{t \to \infty}\int_{\bbr^6} |v-\bu|^2 f dxdv =0.
\]
Thus the proof of Theorem \ref{thm-exist} is completed. $\hfill \square$

If $\bu_0\in L^1$ in addition, then the quantitative decay rate of $E(t)$ can be achieved by means of the Fourier splitting method.
\vskip 3mm
\noindent \textit{Proof of Theorem \ref{thm-beha}.} Adding \eqref{eq-cs-ener-dt} to \eqref{eq-s-ener-dt} yields
\begin{equation}\label{eq-elener-dt}
 \begin{aligned}
   \frac{d}{dt}E(t) +\|\nabla\bu\|_{L^2}^2+\int_{\bbr^6}|v-\bu|^2 f dxdv \le 0.
 \end{aligned}
\end{equation}
Since
\begin{equation}\label{eq-du-fsplit}
 \begin{aligned}
   \|\nabla\bu\|_{L^2}^2=& \int_{\bbr^3} |\hat\bu(\xi)|^2 |\xi|^2 d\xi\\
   \ge&\int_{|\xi|\ge c(t+c^2)^{-\frac12}} |\hat\bu(t,\xi)|^2 |\xi|^2 d\xi\\
   \ge& \frac{c^2}{t+c^2} \int_{|\xi|\ge c(t+c^2)^{-\frac12}} |\hat\bu(t,\xi)|^2d\xi\\
   =& \frac{c^2}{t+c^2}  \int_{\bbr^3} |\bu(t,x)|^2 dx- \frac{c^2}{t+c^2} \int_{|\xi|\le c(t+c^2)^{-\frac12}} |\hat\bu(t,\xi)|^2d\xi,
 \end{aligned}
\end{equation}
for some constant $c>0$ to be determined later. Here we have used the Plancherel Theorem
\[
 \|\hat\bu(t)\|_{L^2}^2=\|\bu(t)\|_{L^2}^2.
\]
Substituting \eqref{eq-du-fsplit} into \eqref{eq-elener-dt} results in
\begin{equation}\label{eq-elener-fs-dt}
 \begin{aligned}
   \frac{d}{dt}E(t) +\frac{c^2}{t+c^2}\left(\|\bu\|_{L^2}^2+\int_{\bbr^6}|v-\bu|^2 f dxdv \right)\le \frac{c^2}{t+c^2} \int_{|\xi|\le c(t+c^2)^{-\frac12}} |\hat\bu(t,\xi)|^2d\xi.
 \end{aligned}
\end{equation}
We observe that
\begin{equation}\label{eq-ener-eqv}
 \begin{aligned}
   &\|\bu\|_{L^2}^2+\int_{\bbr^6}|v|^2 f dxdv\\
   \le& \|\bu\|_{L^2}^2+ 2\int_{\bbr^6}|v-\bu|^2 f dxdv +2\int_{\bbr^6}|\bu|^2 f dxdv\\
   \le& \|\bu\|_{L^2}^2+ 2\int_{\bbr^6}|v-\bu|^2 f dxdv +2\|\rho_0\|_{L^{\infty}} \|\bu\|_{L^2}^2\\
   \le&2 (1+\|\rho_0\|_{L^{\infty}}) \left(\|\bu\|_{L^2}^2+\int_{\bbr^6}|v-\bu|^2 f dxdv \right),
 \end{aligned}
\end{equation}
where we have used Lemma \ref{lm-cs-s-emapriori}(2).
Substituting \eqref{eq-ener-eqv} into \eqref{eq-elener-fs-dt} gives
\begin{equation}\label{eq-elener-fsn-dt}
 \begin{aligned}
   \frac{d}{dt}E(t) +\frac{c^2}{(t+c^2)(1+\|\rho_0\|_{L^{\infty}})}E(t) \le \frac{c^2}{t+c^2}\int_{|\xi|\le c(t+c^2)^{-\frac12}} |\hat\bu(t,\xi)|^2d\xi.
 \end{aligned}
\end{equation}
Take $c^2=3(1+\|\rho_0\|_{L^{\infty}})$. Then \eqref{eq-elener-fsn-dt} becomes
\begin{equation}\label{eq-elener-fsngro-dt}
 \begin{aligned}
   \frac{d}{dt}E(t) +\frac{3}{t+c^2}E(t) \le \frac{c^2}{t+c^2}\int_{|\xi|\le c(t+c^2)^{-\frac12}} |\hat\bu(t,\xi)|^2d\xi.
 \end{aligned}
\end{equation}
Applying the Fourier transform to $\eqref{eq-cs-ns}_2$ leads to
\begin{equation}\label{eq-flueq-ftrans}
 \begin{aligned}
   \hat\bu_t+\widehat{\bu\cdot\nabla\bu} +\widehat{\nabla P}=-|\xi|^2\hat\bu +\hat\bh.
 \end{aligned}
\end{equation}
It follows from \eqref{eq-flueq-ftrans} that
\begin{equation}\label{eq-flueq-ftrans-uest}
 \begin{aligned}
   |\hat\bu(t,\xi)|\le&  |\hat\bu_0(\xi)| e^{-t|\xi|^2}\\
   &+\int_0^t e^{-(t-s)|\xi|^2}
   \Big(|\widehat{\bu\cdot\nabla\bu}|(s,\xi) +|\widehat{\nabla P}|(s,\xi)+|\hat\bh|(s,\xi) \Big) ds.
 \end{aligned}
\end{equation}
Using properties of Fourier's transform and Young's inequality for convolutions, we have
\begin{equation}\label{eq-flueq-ftrans-convest}
 \begin{aligned}
   |\widehat{\bu\cdot\nabla\bu}|(s,\xi)= |\Widehat{\nabla\cdot(\bu\otimes\bu)}|(s,\xi)\le|\xi|\cdot |\widehat{\bu\otimes\bu}|(s,\xi)\le C |\xi| \cdot \|\bu(s)\|_{L^2}^2,
 \end{aligned}
\end{equation}
for some constant $C>0$. Here $\bu\cdot\nabla\bu=\nabla\cdot(\bu\otimes\bu)$ is due to the fact that $\nabla\cdot\bu=0$.
Taking divergence to $\eqref{eq-cs-ns}_2$ yields
\begin{equation}\label{eq-lapl-p}
 \begin{aligned}
  \Delta P=-\nabla\cdot(\bu\cdot\nabla\bu)+\nabla\cdot\bh.
 \end{aligned}
\end{equation}
From \eqref{eq-flueq-ftrans-convest} and \eqref{eq-lapl-p}, we deduce that
\begin{equation}\label{eq-dp-ftrans-est}
 \begin{aligned}
   |\widehat{\nabla P}|(s,\xi)\le& C\Big(|\widehat{\bu\cdot\nabla\bu}|(s,\xi)+ |\hat{\bh}|(s,\xi)\Big)\\
   \le& C |\xi|\cdot \|\bu(s)\|_{L^2}^2+ C|\hat{\bh}|(s,\xi),
 \end{aligned}
\end{equation}
for some constant $C>0$. Substituting \eqref{eq-flueq-ftrans-convest} and \eqref{eq-dp-ftrans-est} into \eqref{eq-flueq-ftrans-uest}, we obtain
\begin{equation}\label{eq-flueq-ftrans-uestsim}
 \begin{aligned}
   |\hat\bu(t,\xi)|\le&  |\hat\bu_0(\xi)| e^{-t|\xi|^2}
   +C \int_0^t e^{-(t-s)|\xi|^2}
   \Big( |\xi|\cdot \|\bu(s)\|_{L^2}^2+ |\hat{\bh}|(s,\xi) \Big) ds\\
   \le&\|\bu_0\|_{L^1} +C\int_0^t |\xi|\cdot \|\bu(s)\|_{L^2}^2 ds +C\int_0^t \|\bh(s)\|_{L^1}ds,
 \end{aligned}
\end{equation}
for some constant $C>0$, where we have used the Hausdorff--Young inequality
\[
 \|\hat\bu_0\|_{L^{\infty}}\le \|\bu_0\|_{L^1},
\]
similarly for $|\hat{\bh}|(s,\xi)$. From Lemma \ref{lm-cs-s-emapriori}(3), it is easy to see that
\[
 E(t)\le E_0 \quad \text{for all $t\in [0, \infty)$}.
\]
Thus, there must exist some constant $K>1$, sufficiently large such that
\begin{equation}\label{eq-ener-bootassum}
 \begin{aligned}
   E(t)<K\Big(1+t\Big)^{-\frac98} \quad \text{for $t\in [0, T]$}.
 \end{aligned}
\end{equation}
Denote by $T_m$ the supremum among all the $T$ in \eqref{eq-ener-bootassum}. We prove by contradiction that $T_m=\infty$. Suppose $T_m<\infty$. Then it holds that
\begin{equation}\label{eq-ener-bootassum-tm}
 \begin{aligned}
   E\Big(T_m\Big)=K\Big(1+T_m \Big)^{-\frac98}.
 \end{aligned}
\end{equation}
It is easy to see that
\begin{equation}\label{eq-u-bootassum}
 \begin{aligned}
   \|\bu(t)\|_{L^2}^2\le 2E(t)< 2K\Big(1+t\Big)^{-\frac98} \quad \text{for $t\in [0, T_m)$}.
 \end{aligned}
\end{equation}
Multiplying \eqref{eq-elener-dt} by $\Big(1+t\Big)^{\frac{17}{16}}$ yields
\begin{equation}\label{eq-tweig-ener}
 \begin{aligned}
   \frac{d}{dt} \left(\Big(1+t\Big)^{\frac{17}{16}}E(t)\right) +\Big(1+t\Big)^{\frac{17}{16}}\int_{\bbr^6}|v-\bu|^2 f dxdv \le \frac{17}{16}\Big(1+t\Big)^{\frac{1}{16}}E(t).
 \end{aligned}
\end{equation}
Integrating \eqref{eq-tweig-ener} over $[0, T_m]$, we have
\begin{equation}\label{eq-tweig-coupest}
 \begin{aligned}
   \int_0^{T_m}\Big(1+t\Big)^{\frac{17}{16}}\int_{\bbr^6}|v-\bu|^2 f dxdvdt \le E_0+CK
 \end{aligned}
\end{equation}
for some some constant $C>0$, where we have used the fact that
\[
 E(t)<K\Big(1+t\Big)^{-\frac98} \quad \text{for $t\in [0, T_m)$}.
\]
From \eqref{eq-tweig-coupest}, we infer that
\begin{equation}\label{eq-coupest}
 \begin{aligned}
   C\int_0^{T_m}\|\bh(s)\|_{L^1}  ds \le& C\int_0^{T_m}\Big(1+s\Big)^{-\frac{17}{32}}\Big(1+s\Big)^{\frac{17}{32}}\|\bh(s)\|_{L^1} ds \\
   \le&C\left( \int_0^{T_m}\Big(1+s\Big)^{\frac{17}{16}}\int_{\bbr^6}|v-\bu|^2 f dxdvds\right)^{\frac12}\\
   \le&C(E_0) \Big(1+K\Big)^{\frac12}.
 \end{aligned}
\end{equation}
Substituting \eqref{eq-u-bootassum} and \eqref{eq-coupest} into \eqref{eq-flueq-ftrans-uestsim}, we deduce that
\begin{equation}\label{eq-ftrans-usquest}
 \begin{aligned}
   |\hat\bu(t,\xi)|^2
   \le C+CK +CK^2 |\xi|^2 \quad \text{for $t\in [0, T_m]$},
 \end{aligned}
\end{equation}
where $C:=C(\|\bu_0\|_{L^1}, E_0)$.
It follows from \eqref{eq-ftrans-usquest} that
\begin{equation}\label{eq-ftrans-usquestlow}
 \begin{aligned}
  \int_{|\xi|\le c(t+c^2)^{-\frac12}} |\hat\bu(t,\xi)|^2d\xi \le CK\Big(1+t\Big)^{-\frac{3}{2}}+ CK^2 \Big(1+t\Big)^{-\frac{5}{2}} \quad \text{for $t\in [0, T_m]$},
 \end{aligned}
\end{equation}
where $C:=C(\|\rho_0\|_{L^{\infty}}, \|\bu_0\|_{L^1}, E_0)$. Combining \eqref{eq-elener-fsngro-dt} with \eqref{eq-ftrans-usquestlow}, we have
\begin{equation}\label{eq-ener-decay}
 \begin{aligned}
  E(t) \le CK\Big(1+t\Big)^{-\frac{3}{2}}+ CK^2 \Big(1+t\Big)^{-\frac{5}{2}} \quad \text{for $t\in [0, T_m]$},
 \end{aligned}
\end{equation}
where $C:=C(\|\rho_0\|_{L^{\infty}}, \|\bu_0\|_{L^1}, E_0)$.
Since
\[
 E\Big(T_m\Big)=K\Big(1+T_m \Big)^{-\frac98}\le E_0,
\]
we have
\begin{equation}\label{eq-tm-est}
 1+T_m\ge \left(\frac{K}{E_0}\right)^{\frac89}.
\end{equation}
From \eqref{eq-ener-decay} and \eqref{eq-tm-est}, we infer that
\begin{equation}\label{eq-ener-decay-impr}
 \begin{aligned}
  E\Big(T_m\Big) \le& \bigg[C\Big(1+T_m \Big)^{-\frac38}+CK \Big(1+T_m \Big)^{-\frac{11}{8}} \bigg] K\Big(1+T_m \Big)^{-\frac98}\\
  \le& CK^{-\frac29} K\Big(1+T_m \Big)^{-\frac98},
 \end{aligned}
\end{equation}
for some constant $C:=C(\|\rho_0\|_{L^{\infty}}, \|\bu_0\|_{L^1}, E_0)$.
Take $K$ suitably large such that
\[
 CK^{-\frac29}=\frac12.
\]
Then we have $E\Big(T_m\Big) \le \frac K2 \Big(1+T_m \Big)^{-\frac98}$, which contradicts \eqref{eq-ener-bootassum-tm}. Therefore, we show that $T_m=\infty$, and it follows form \eqref{eq-ener-decay} that
\begin{equation}\label{eq-ener-decay-op}
 \begin{aligned}
  E(t) \le C\Big(1+t\Big)^{-\frac{3}{2}}
 \end{aligned}
\end{equation}
for some constant $C:=C(\|\rho_0\|_{L^{\infty}}, \|\bu_0\|_{L^1}, E_0)$. It is easy to deduce from \eqref{eq-ener-decay-op} that
\[
 \|\bu(t)\|_{L^2} \le C\Big(1+t\Big)^{-\frac{3}{4}}
\]
and
\begin{equation*}\label{eq-ener-decay-final}
 \begin{aligned}
 \int_{\bbr^6} |v-\bu|^2 f dxdv\le&2 \int_{\bbr^6} |v|^2 f dxdv+ 2\int_{\bbr^6} |\bu|^2 f dxdv\\
 \le& 2 \int_{\bbr^6} |v|^2 f dxdv+ 2\|\rho_0\|_{L^{\infty}} \|\bu(t)\|_{L^2}^2\\
 \le& CE(t)
 \le C\Big(1+t\Big)^{-\frac32},
 \end{aligned}
\end{equation*}
for some constant $C:=C(\|\rho_0\|_{L^{\infty}}, \|\bu_0\|_{L^1}, E_0)$, where we have used Lemma \ref{lm-cs-s-emapriori}(2). This completes the proof of Theorem \ref{thm-beha}. $\hfill \square$
\vskip 3mm
\noindent\textbf{Acknowledgements}. Chunyin Jin is supported by NSFC Grant No. 12001530.

\end{document}